\documentclass[12pt,oneside,english]{amsart}
\usepackage[T1]{fontenc}
\usepackage[latin9]{inputenc}
\usepackage{geometry}
\usepackage{enumitem}
\usepackage{mathrsfs}
\usepackage{amstext}
\usepackage{amsthm}
\usepackage{amssymb}
\usepackage{bm}

\usepackage[svgnames]{xcolor}
\usepackage[bookmarksnumbered=true]{hyperref} 
\hypersetup{
	colorlinks = true,
	linkcolor = Blue,
	anchorcolor = blue,
	citecolor = Green,
	filecolor = blue,
	urlcolor = FireBrick
}
\usepackage{bbm}
\usepackage{comment}

\usepackage{algorithm}
\usepackage{algpseudocode}

\usepackage{aliascnt}
\usepackage[nameinlink]{cleveref}

\makeatletter
\numberwithin{equation}{section}
\numberwithin{figure}{section}

\theoremstyle{plain}
\newtheorem{theorem}{Theorem}[section]

\theoremstyle{definition}
\newaliascnt{definition}{theorem}
\newtheorem{definition}[definition]{Definition}
\aliascntresetthe{definition}
\crefname{definition}{Definition}{Definitions}

\theoremstyle{definition}
\newaliascnt{question}{theorem}

\aliascntresetthe{question}
\crefname{question}{Question}{Questions}

\theoremstyle{definition}
\newaliascnt{remark}{theorem}
\newtheorem{remark}[remark]{Remark}
\aliascntresetthe{remark}
\crefname{remark}{Remark}{Remarks}

\theoremstyle{plain}
\newaliascnt{corollary}{theorem}
\newtheorem{corollary}[corollary]{Corollary}
\aliascntresetthe{corollary}
\crefname{corollary}{Corollary}{Corollaries}

\theoremstyle{plain}
\newaliascnt{lemma}{theorem}
\newtheorem{lemma}[lemma]{Lemma}
\aliascntresetthe{lemma}
\crefname{lemma}{Lemma}{Lemmas}

\theoremstyle{plain}
\newaliascnt{proposition}{theorem}

\aliascntresetthe{proposition}
\crefname{proposition}{Proposition}{Propositions}

\theoremstyle{definition}
\newaliascnt{example}{theorem}

\aliascntresetthe{example}
\crefname{example}{Example}{Examples}

\theoremstyle{definition}
\newaliascnt{assumption}{theorem}
\newtheorem{assumption}[assumption]{Assumption}
\aliascntresetthe{assumption}
\crefname{assumption}{Assumption}{Assumptions}

\theoremstyle{definition}
\newaliascnt{problem}{theorem}

\aliascntresetthe{problem}
\crefname{problem}{Problem}{Problems}

\newtheorem*{definition*}{Definition}
\newtheorem*{proposition*}{Proposition} 
\newtheorem*{remark*}{Remark}
\newtheorem*{example*}{Example}
\newtheorem*{problem*}{Problem}
\newtheorem*{question*}{Question}
\newtheorem*{theorem*}{Theorem}
\newtheorem*{lemma*}{Lemma}
\newtheorem*{corollary*}{Corollary}
\newtheorem*{Acknowledgments*}{Acknowledgments}

\renewenvironment{proof}[1][\proofname]{\medskip \noindent {\bfseries #1. }}{\hfill \qedsymbol\medskip}

\MakeRobust{\ref}
\newcommand{\labeltext}[2]{
\@bsphack
\csname phantomsection\endcsname
\def\@currentlabel{#1}{\label{#2}}
\@esphack
}

\usepackage{scalerel}
\usepackage[usestackEOL]{stackengine}
\def\dashint{\,\ThisStyle{\ensurestackMath{%
  \stackinset{c}{.2\LMpt}{c}{.5\LMpt}{\SavedStyle-}{\SavedStyle\phantom{\int}}}%
  \setbox0=\hbox{$\SavedStyle\int\,$}\kern-\wd0}\int}

\usepackage{enumitem}

\DeclareRobustCommand{\SkipTocEntry}[5]{}

\newcommand{\mR}{\mathbb{R}}   
\newcommand{\mN}{\mathbb{N}}   
\newcommand{\mK}{\mathbb{K}}   
\newcommand{\mS}{\mathbb{S}}   

\newcommand{\abs}[1]{\lvert #1 \rvert}  
\newcommand{\norm}[1]{\lVert #1 \rVert}  

\newcommand{\mD}{\mathscr{D}}

\newcommand{\mF}{\mathscr{F}}

\newcommand{\mJ}{\mathcal{J}}

\newcommand{\calE}{\mathcal{E}}

\newcommand{\mX}{\mathcal{X}}

\newcommand{\bfu}{\mathbf{u}}
\newcommand{\bfv}{\mathbf{v}}
\newcommand{\bfw}{\mathbf{w}}
\newcommand{\bfz}{\mathbf{z}}

\newcommand{\bfV}{\mathbf{V}}

\newcommand{\sfm}{\mathsf{m}}

\DeclareMathOperator{\supp}{supp}

\newcommand{\rmd}{\mathrm{d}}

\makeatother

\usepackage{babel}

\usepackage{orcidlink}

\begin{document}

\title[A variational model with finite carrying capacity in Euclidean space]{A variational model with finite carrying capacity for segregated multi-species populations in the whole Euclidean space}

\begin{sloppypar}

\begin{abstract} 
We study a variational problem motivated by models of multiple segregated species in a nonhomogeneous environment. The analysis combines tools from potential theory, in particular free boundary methods. 
The main novelty is that we formulate the variational problem in the whole Euclidean space while imposing a finite carrying capacity constraint, which allows us to investigate saturated regions where the species attain their maximal density. 
In addition, we provide a modeling interpretation of the constrained energy. We show that it arises as the variational limit of energies with increasingly strong crowding and interspecific competition penalties, in the sense that, regardless of their relative rates, every sequence of minimizers of the penalized energies admits a subsequence that converges strongly to a minimizer of the constrained energy. 
\end{abstract}

\author[Kow]{Pu-Zhao Kow\,\orcidlink{0000-0002-2990-3591}}
\address{Department of Mathematical Sciences, National Chengchi University, Taipei 116, Taiwan \& Kanazawa University, Kakuma, 920-1192 Kanazawa, Japan} 
\email{pzkow@g.nccu.edu.tw}

\author[Wu]{Chang-Hong Wu\,\orcidlink{0000-0001-7290-1615}}
\address{Department of Applied Mathematics, National Yang Ming Chiao Tung University, Hsinchu, 300, Taiwan}
\email{changhong@math.nctu.edu.tw}

\subjclass[2020]{35J05, 35J15, 35J20, 35J61, 35J91}
\keywords{mathematical biology, multi-species segregation, finite carrying capacity, variational problem, free boundary} 
\thanks{Kow was supported by the National Science and
Technology Council of Taiwan (NSTC 115-2628-M-004-001), and by the National Center for Theoretical Sciences of Taiwan.
Wu was supported by the National Science and
Technology Council of Taiwan (NSTC 113-2628-M-A49-004-MY4), and by the National Center for Theoretical Sciences of Taiwan.}

\maketitle

\tableofcontents{}

\subsection*{Data Availability Statement} 

No datasets were generated or analysed during the current study. 

\subsection*{Notations} 

To incorporate tools from potential theory, we adopt the notation of \cite{KSS26Multiphase}.

\section{Introduction} 

Spatial segregation of populations is a common phenomenon in ecology (see, e.g., \cite{SpatialSegregationSpecies} for recent experimental observations). In \cite{MK1980segregated,SKT1979segregated}, 
interaction-diffusion and cross-diffusion models showed how spatially heterogeneous patterns and segregation may be generated by density-dependent dispersal and competitive interactions. 
In particular, two competing species become spatially segregated as the interspecific competition rates tend to infinity, a regime known as the spatial segregation limit \cite{CDHMN2004segregated,DHMP1999segregated,HIMN01}, see also the subsequent works \cite{IMW21,IMW2023segregated,ZZLL2012segregated}. 
Closely related strong-coupling limits for variational elliptic systems can be found in \cite{CL08,NTTV10}. 

A variational theory of spatial segregation was established in \cite{CTV05SegregationProblem}.  
They developed a variational framework for
spatially segregated stationary states of an arbitrary number of competing densities on bounded domains.
These states are obtained by minimizing the sum of internal energies among all segregated configurations. Subsequent numerical investigations were carried out in \cite{AB16population,Bozorgnia2009Segregated,BA13segregated,BAKV2016PartialSegregated}. We will explain later the differences between our work and these studies. 

Let $d\ge 2$ and $m\ge 2$ be integers. 
We consider $m$ species dispersing in an unbounded heterogeneous habitat represented by $\mR^d$ and study their spatially segregated equilibrium configurations. 
Let $u_{1},\cdots,u_{m}\ge 0$ denote the population densities at each point $x\in\mR^{d}$. For each $x\in\mR^{d}$, we impose a finite carrying capacity $u_{i}(x)\le 1$ for all $i=1,\cdots,m$ and a segregated condition
\begin{equation*}
u_{i}(x)u_{j}(x)= 0 \quad \text{for all $i\neq j$.} 
\end{equation*} 
The heterogeneity of the environment is described by potential functions $V_{i}=V_{i}(x)$ for $i=1,\cdots,m$. Here, $V_{i}(x)>0$ corresponds to an unfavorable (or ``bad'') environment for $i^{\rm th}$ species, while $V_{i}(x)<0$ corresponds to a favorable (or ``good'') one. In general, the functions $V_{i}$ may differ, reflecting that the species can respond differently to the environment.  

Suppose that the species undergo diffusive motion. Then, in the spirit of \cite{KSS26Multiphase}, the equilibrium states can be described by the following minimization problem:
\begin{equation}
\begin{aligned} 
& \text{minimizing} && \mF_{\bfV}(\bfu) := \sum_{i=1}^{m}\mF_{V_{i}}(u_{i}) = \sum_{i=1}^{m}\int_{\mR^{d}} \left(\abs{\nabla u_{i}}^{2} + V_{i}(x)\abs{u_{i}}^{2}\right)\,\rmd\sfm \\ 
& \text{subject to} && \mS_{m} := \left\{ \bfu\in(H^{1}(\mR^{d}))^{m} : \begin{aligned}
& 0\le u_{i} \le 1 && \text{for $i=1,\cdots,m$} \\ 
& u_{i}\cdot u_{j}=0 && \text{for $i\neq j$}
\end{aligned} \right\} 
\end{aligned} \label{eq:main-minimizing} 
\end{equation}
where we denote $\bfu=(u_{1},\cdots,u_{m})$ and $\bfV=(V_{1},\cdots,V_{m})$. 
A vector-valued function $\bfu_{*} \in \mS_{m}$ is called a (global) minimizer of \eqref{eq:main-minimizing} in $\mS_{m}$ if $\mF_{\bfV}(\bfu_{*})\le\mF_{\bfV}(\bfu)$ for all $\bfu\in\mS_{m}$. 

The case when $m=1$ was studied in \cite{KKO26population}. The main objective of the present paper is to investigate the case $m\ge 2$. A related variational problem in one dimension ($d=1$) was analyzed in \cite{AKO24TwoStepMinimization2, AKO25TwoStepMinimization1}, where the existence of minimizers was established using the Sobolev embedding $H^{1}(\mR^{1})\subset C^{0}(\mR^{1})$ and the fact that, for each $u\in H^{1}(\mR^{1})$, there exists $a\in\mR^{1}$ such that $\abs{u(a)}=\norm{u}_{L^{\infty}(\mR)}$. These properties fail in higher dimensions, which prevents a direct extension of their arguments to the case $d \ge 2$.


We also note that \cite{AB16population,Bozorgnia2009Segregated,BA13segregated,BAKV2016PartialSegregated,CTV05SegregationProblem} consider related settings. The main distinction is that we work in the whole Euclidean space, whereas their analyses are carried out on bounded domains. Moreover, the functional $\mF_{\bfV}$ in \eqref{eq:main-minimizing} is not coercive in general (cf. \Cref{assu:existence}). Instead, we impose a finite carrying-capacity constraint $u_i\leq 1$. Consequently, the existing theories do not directly apply to our framework.


The two constraints defining $\mS_m$ can be interpreted as idealized infinite-strength limits of two distinct ecological mechanisms: rapid regulation above the local carrying capacity (represented by the constraint $u_i\leq1$), and strong interspecific competition (represented by the segregation condition $u_i u_j=0$). We now make this interpretation precise.
Let $(H_{+}^{1}(\mR^{d}))^{m}$ denote the Cartesian product of $m$ copies of 
\begin{equation*}
H_{+}^{1}(\mR^{d}) :=\left\{ u\in H^1(\mR^d) : u\geq0 \right\}, 
\end{equation*}
that is, 
\begin{equation*}
(H_{+}^{1}(\mR^{d}))^{m} := \left\{ \bfu\in(H^1(\mR^d))^m : u_{i}\geq0 \right\}.
\end{equation*}
We denote 
\begin{equation*}
a_{+} := \max\{a,0\} ,\quad a_{-} := -\min\{a,0\}. 
\end{equation*}
For each $\varepsilon,\kappa>0$, we penalize excess above the normalized carrying capacity $1$ and overlap between population densities through the energy 
\begin{equation} 
\calE_{\bfV,\varepsilon,\kappa}(\bfu)
:=\mF_{\bfV}(\bfu) +\frac{1}{\varepsilon}\sum_{i=1}^{m} \int_{\mR^d}(u_{i}-1)_{+}^{2}\,\rmd\sfm + \kappa \sum_{1\leq i<j\leq m} \int_{\mR^d}u_{i}^{2}u_{j}^{2}\, \rmd\sfm. \label{eq:energy1}
\end{equation}
We set $\calE_{\bfV,\varepsilon,\kappa}(\bfu)=+\infty$ if  $\bfu\notin (H_{+}^{1}(\mR^{d}))^{m}$ or if any of the overlap integrals is infinite. 
The second term is the Moreau-Yosida regularization of the upper-obstacle constraint $u_i\leq1$, see \cite{attouch1984variational}. Related penalization methods for obstacle problems can be found in \cite{kinderlehrer2000introduction}. The last term is a quartic overlap penalty commonly used in variational models of strong competition and phase segregation, see \cite{CL08,NTTV10}.
For finite $\varepsilon$ and $\kappa$, populations may slightly exceed the carrying capacity and different species may overlap, but both effects incur an energetic cost. Here, $\varepsilon^{-1}$ measures the strength of the crowding penalty, while $\kappa$ measures the strength of interspecific competition. In the associated $L^2$-gradient flow, $\varepsilon$ also determines the time scale of regulation above the carrying capacity.

Reaction-diffusion models of population dispersal have a long history, dating back to the seminal works \cite{Fisher1937,KPP1937}, see also \cite{Skellam1951PopulationDynamics, Aronson1985SkellamRevisit,GM1977Population}. 
In the present setting, for nonnegative initial data, the formal $L^2$-gradient flow associated with \eqref{eq:energy1}, after a constant rescaling of time, is given by 
\begin{equation}\label{eq:penalized-rd}
\partial_{t} u_{i}=\Delta u_{i}+r_{i}(x)u_{i}-\frac{1}{\varepsilon}(u_{i}-1)_{+} -\kappa u_{i}\sum_{j\ne i}u_{j}^{2}, \quad \text{for all $i=1,\cdots,m$.} 
\end{equation}
Here, $r_{i}=-V_{i}$ represents the intrinsic growth rate of the $i^{\rm th}$ species. 
Note that \eqref{eq:penalized-rd} is intended as a phenomenological relaxation of the two ecological constraints defining $\mS_m$, rather than as a canonical Lotka-Volterra model. 
In \eqref{eq:penalized-rd}, we interpret $u_{i}$ as a normalized local population density, with $u_{i}=1$ representing the local carrying capacity. When $u_{i}$ exceeds $1$, the term $\frac{1}{\varepsilon}(u_{i}-1)_{+}$ removes the excess density on the fast time scale $\varepsilon$, which may represent mortality, emigration, or unsuccessful settlement. This mechanism is inactive when the population is below the carrying capacity. 
For example, in the spatially homogeneous scalar case with constant $r>0$ and no competition, the regulated equilibrium is 
\begin{equation*} 
u_{\varepsilon}=\frac{1}{1-\varepsilon r} = 1+\varepsilon r+O(\varepsilon^{2}),\quad 0<\varepsilon r<1.
\end{equation*} 
Thus, the finite $\varepsilon$ model permits an $O(\varepsilon)$ excess above the carrying capacity, while the constraint $u_{i}\leq1$ is recovered in the limit $\varepsilon\searrow0$. 

The interaction term $\kappa u_{i}\sum_{j\ne i}u_{j}^{2}$ in \eqref{eq:penalized-rd} models symmetric, density-dependent interspecific competition. The particular power is a modeling choice dictated by the variational structure: it yields a symmetric $L^2$-gradient flow and allows the strong-competition limit to be analyzed using energy methods. 

A standard framework for studying singular limits of variational problems is provided by $\Gamma$-convergence, see, e.g., \cite{attouch1984variational,braides2002gamma,dal2012introduction}.
Our $\Gamma$-convergence result (\Cref{thm:singular-limit}) makes this interpretation rigorous: as $\varepsilon\searrow0$ and $\kappa\to\infty$, independently of their relative rates, the functional $\calE_{\bfV,\varepsilon,\kappa}$ $\Gamma$-converges, with respect to the weak topology of $(H^1(\mathbb{R}^d))^m$, to the functional $\mF_{\bfV}$ on $\mS_{m}$ and to $+\infty$
outside $\mS_{m}$. Consequently, every weak limit point of a sequence of global minimizers of $\calE_{\bfV,\varepsilon,\kappa}$ is a minimizer of \eqref{eq:main-minimizing}. Moreover, for any
sequences $\varepsilon_n\searrow0$ and $\kappa_n\rightarrow\infty$, each corresponding sequence of global minimizers admits a subsequence converging strongly in $(H^{1}(\mR^d))^m$ to such a minimizer. 
Thus, the segregated problem arises as the simultaneous fast-crowding and strong-competition limit of the penalized model.

\section{Main results}

\subsection{Regularity of minimizers in \texorpdfstring{$\mS_{m}$}{Sm}} 

As with other variational problems, we begin by studying the Euler-Lagrange equation associated with the minimizing problem \eqref{eq:main-minimizing}, analogous to \cite[Lemma~3.3]{KSS26Multiphase}. An existence result will be proved later in \Cref{thm:existence}, and a sufficient condition for the co-existence of $m$ populations will be given in \Cref{propr:m-coexist}. 
For each $j =1,\cdots,m$, an expression of the form 
\begin{equation*}
\sum_{\ell\neq j}a_{\ell}
\end{equation*}
denotes the sum over all $\ell\in\{1,\cdots,m\}\setminus\{j\}$. This convention should be clear from the context. In particular, summing over $\ell\ne j$ is not the same as summing over $j\ne\ell$, where in the latter case $\ell$ is fixed and the sum is taken over $j$. 
With this notation at hand, we can now state our first main result. 

\begin{theorem}\label{thm:1} 
Let $\bfV = (V_{1},\cdots,V_{m}) \in (L^{\infty}(\mR^{d}))^{m}$. Suppose that $\bfu_{*}=(u_{*,1},\cdots,u_{*,m})$ is a minimizer of \eqref{eq:main-minimizing} in $\mS_{m}$. Then, for each $j=1,\cdots,m$, the function $u_{*,j}$ has a continuous representative and satisfies 
\begin{equation}
(\Delta-V_{j})u_{*,j}  - \sum_{\ell\neq j}(\Delta-V_{\ell})u_{*,\ell} \le (V_{j})_{-}\chi_{K_{j}} \quad \text{in $\mD'(\mR^{d})$,}
\label{eq:Euler-Lagrange}
\end{equation}
where $K_{j}:=\{u_{*,j}=1\}$. For each $j=1,\cdots,m$, we define the open set $D_{j}:=\{u_{*,j}>0\}$. Moreover: 
\begin{enumerate} 
\renewcommand{\labelenumi}{\theenumi}
\renewcommand{\theenumi}{\rm (\roman{enumi})} 
\item \label{itm:two-phase} If $m=2$, then $\bfu_{*}$ admits a locally Lipschitz continuous representative on $\mR^{d}$. 
\item \label{itm:multi-phase} If $m\ge 3$, then $\bfu_{*}$ admits a locally Lipschitz continuous representative on 
$\mR^{d}\setminus A$, where
\[
A:=\bigcup_{1\leq i<j<k\leq m}(\partial D_{i}\cap\partial D_{j}\cap\partial D_{k}).
\] 
\end{enumerate}
\end{theorem}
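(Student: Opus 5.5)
We derive \eqref{eq:Euler-Lagrange} from a one-sided variation that respects both constraints. Fix $j$ and a nonnegative test function $\phi\in C_c^\infty(\mR^d)$. For small $t>0$, define the competitor $\bfu_t$ through the auxiliary function
\[
w_t:=\Big(u_{*,j}-\sum_{\ell\neq j}u_{*,\ell}-t\phi\Big).
\]
Set $u_{t,j}:=\min\{(w_t)_+,1\}$ and, for $\ell\neq j$, $u_{t,\ell}:=u_{*,\ell}$ on $\{u_{*,\ell}>0\}\cap\{w_t<0\}$, with $u_{t,\ell}:=(w_t)_-\chi_{\{u_{*,\ell}>0\}}$. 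Equivalently, we lower the "signed" function $u_{*,j}-\sum_{\ell\ne j}u_{*,\ell}$ (whose positive part is $u_{*,j}$ and whose negative part is $\sum_{\ell\ne j}u_{*,\ell}$) and redistribute its negative part among the $\ell$-phases. Following \cite[Lemma~3.3]{KSS26Multiphase}, we check the following:
\begin{itemize}
\item The competitor is segregated and lies in $H^1$, since the supports of $u_{*,\ell}$, $\ell\neq j$, are disjoint.
\item The constraint $u_{t,\ell}\le1$ persists, because $\phi\ge0$ only decreases the positive part.
\item The negative part can grow beyond $1$ only in a way we truncate, and the truncation only lowers the energy where $V_\ell\ge0$. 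This point requires care. The simplest choice is to also cap at $1$ and note that the cap only matters where $u_{*,\ell}=1$.
\end{itemize}
Expanding $\mF_{\bfV}(\bfu_t)-\mF_{\bfV}(\bfu_*)\ge0$ to first order in $t$ gives
\[
\int \nabla u_{*,j}\cdot\nabla\phi+V_ju_{*,j}\phi-\sum_{\ell\ne j}(\ldots)\ge -\int (V_j)_-\chi_{K_j}\phi.
\]
The term $(V_j)_-\chi_{K_j}$ accounts for the region where the cap at $1$ interacts with the first variation. There, $u_{*,j}\equiv1$, so $\nabla u_{*,j}=0$ a.e., and the only contribution is $V_j\phi$, which is bounded by $(V_j)_-\phi$. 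Rewriting the first-order inequality distributionally yields \eqref{eq:Euler-Lagrange}.

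Continuity comes next. By \eqref{eq:Euler-Lagrange}, the function $v_j:=u_{*,j}-\sum_{\ell\ne j}u_{*,\ell}$ satisfies $\Delta v_j\le f_j$ with $f_j\in L^\infty$, because the $V_\ell u_{*,\ell}$ terms are bounded. Hence $v_j+\frac{\|f_j\|_\infty}{2d}|x|^2$ is locally superharmonic and has an upper semicontinuous representative. The comparison argument of \cite{KSS26Multiphase} then applies, using this for every $j$ together with the constraint $0\le u_{*,i}\le1$. On $D_j$ one has $u_{*,j}=v_j$, so $u_{*,j}$ is lower semicontinuous there, since it is the positive part of $v_j$. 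Upper semicontinuity of $u_{*,j}$ follows from the inequality for the other indices, because $-u_{*,j}$ appears in $v_\ell$ with a favorable sign. Combining the two gives a continuous representative.

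For the Lipschitz statements, we first treat interior points. Inside $D_j\setminus K_j$, the minimizer solves $\Delta u_{*,j}=V_ju_{*,j}$, so it is $C^{1,\alpha}$. Near $K_j$ it is an upper obstacle problem with bounded right-hand side, which gives $C^{1,1}$ regularity, and in particular Lipschitz regularity.

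The free-boundary points need a separate argument. At a two-phase point $x_0\in\partial D_i\cap\partial D_j$ where no third phase accumulates, the function $v=u_{*,i}-u_{*,j}$ locally satisfies both $\Delta v\le C$ and $\Delta v\ge -C$. The first follows from \eqref{eq:Euler-Lagrange} with index $i$ and the second with index $j$, since the other phases vanish nearby. Thus $\Delta v\in L^\infty$ locally, and $W^{2,p}$ estimates give $v\in C^{1,\alpha}$, hence Lipschitz. At a one-phase boundary point, $u_{*,j}\ge0$ is subharmonic up to bounded error and vanishes there, and the standard linear growth estimate applies. For $m=2$ the set $A$ is empty, which gives (i). For $m\ge3$, away from $A$ each point has a neighbourhood meeting at most two phases, which gives (ii).

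The main obstacle is the construction of an admissible competitor in the first step. It must simultaneously preserve segregation, respect the cap $u\le1$ for all phases, and produce exactly the error term $(V_j)_-\chi_{K_j}$. The second delicate point is the two-sided bound on $\Delta v$ at two-phase points, which requires that third phases do not accumulate there. This is exactly why $A$ is excluded.
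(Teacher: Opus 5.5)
\textbf{There is a genuine gap in your first step, on which everything else rests.} The variation goes in the wrong direction, and as written it is not admissible.

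\emph{Wrong direction.} Lowering $v_j:=u_{*,j}-\sum_{\ell\neq j}u_{*,\ell}$ by $t\phi$ only makes phase $j$ smaller: $u_{t,j}=(u_{*,j}-t\phi)_+\le u_{*,j}$. So the cap at $1$ never acts on phase $j$, and no term $(V_j)_-\chi_{K_j}$ can come out of it. Take even a correctly defined version of this competitor. To first order the energy changes by $-2t\,[\int\nabla v_j\cdot\nabla\phi\,\rmd\sfm+\int(V_ju_{*,j}-\sum_{\ell\neq j}V_\ell u_{*,\ell})\phi\,\rmd\sfm]$, plus cap contributions of the raised phases $\ell\neq j$. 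Minimality then gives the \emph{lower} bound $\Delta v_j-V_ju_{*,j}+\sum_{\ell\neq j}V_\ell u_{*,\ell}\ge-\sum_{\ell\neq j}(V_\ell)_-\chi_{K_\ell}$, not \eqref{eq:Euler-Lagrange}. The first-order inequality you display does not follow from your competitor. For $m=2$, giving all of $\min\{(w_t)_-,1\}$ to phase $2$ is exactly the correct variation for the index $2$, so it proves \eqref{eq:Euler-Lagrange} for $j=2$, not for the lowered index.

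\emph{Not admissible.} On $D_\ell$ one has $w_t=-u_{*,\ell}-t\phi$, so $(w_t)_-\chi_{\{u_{*,\ell}>0\}}=(u_{*,\ell}+t\phi)\chi_{D_\ell}$. This jumps by $t\phi$ across $\partial D_\ell$ and is not in $H^1$; disjoint supports do not help. Moreover, on $\{w_t<0\}\setminus\bigcup_{\ell\neq j}D_\ell$ (e.g.\ on the common zero set, where $w_t=-t\phi$) the negative part is given to no phase. Your two clauses defining $u_{t,\ell}$ also contradict each other on $D_\ell$. Keeping the $\ell$-phases unchanged gives an admissible competitor, but it only yields $(\Delta-V_j)u_{*,j}\ge0$. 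For $m\ge3$ there is no way to raise several phases by $t\phi$ near their common boundaries while preserving segregation and $H^1$. So this is not a bookkeeping issue.

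\textbf{The missing idea is to vary in the opposite direction:} raise the single phase $j$ and lower every other phase separately. For $\psi\ge0$, the paper (following \cite[Lemma~3.3]{KSS26Multiphase}) takes $z_j:=\min\{(v_j+\delta\psi)_+,1\}$ and $z_\ell:=(u_{*,\ell}-u_{*,j}-\delta\psi)_+$ for $\ell\neq j$.
\begin{itemize}
\item Segregation: $\{z_j>0\}=\{v_j+\delta\psi>0\}$ and $\{z_\ell>0\}=D_\ell\cap\{v_j+\delta\psi<0\}$, so $\bfz\in\mS_m$ with nothing to redistribute.
\item The cap now acts only on phase $j$. On $\{u_{*,j}\ge1-\delta\psi\}$ one has $V_j(z_j^2-u_{*,j}^2)\le2\delta(V_j)_+\psi$.
\item As $\delta\to0^+$ this produces $V_ju_{*,j}\chi_{\{u_{*,j}<1\}}+(V_j)_+\chi_{K_j}=V_ju_{*,j}+(V_j)_-\chi_{K_j}$. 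This is precisely the mechanism you describe in words; the remaining error terms are $O(\delta^2)$.
\end{itemize}

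With \eqref{eq:Euler-Lagrange} obtained this way, your continuity and Lipschitz steps are essentially the paper's: semicontinuity as in \cite[Corollary~3.4]{KSS26Multiphase}, and the two-sided bound on $\Delta(u_{*,j}-u_{*,k})$ away from $\bigcup_{\ell\neq j,k}\overline{D_\ell}$. There are two slips in those steps.
\begin{itemize}
\item $\Delta v_j\le C$ makes $v_j-\frac{C}{2d}\abs{x}^2$ superharmonic, not $v_j+\frac{C}{2d}\abs{x}^2$. Hence $v_j$ is lower semicontinuous, which is what you then use for $u_{*,j}=(v_j)_+$.
\item At one-phase boundary points, being subharmonic up to a bounded error is the wrong side for a growth estimate. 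You need $\Delta u_{*,j}\le C$, i.e.\ \eqref{eq:Euler-Lagrange} itself. Such points are in any case covered by your two-phase argument with any $k$ for which $u_{*,k}\equiv0$ nearby.
\end{itemize}
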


With \eqref{eq:Euler-Lagrange} and the existence of a continuous representative of $\bfu_{*}$ at hand, it is straightforward to establish \ref{itm:two-phase} and \ref{itm:multi-phase}. 
From \eqref{eq:Euler-Lagrange}, it follows that 
\begin{equation*}
- \sum_{\ell\neq k}(\Delta - V_{\ell})u_{*,\ell} \le 0 \quad \text{in $\mD'(\mR^{d}\setminus \overline{D_{k}})$.} 
\end{equation*}
Fix $j\in\{1,2,\cdots,m\}$ and $k \neq j$. From the above inequality, we deduce 
\begin{equation*}
-(\Delta - V_j)u_{*,j} \le 0 \quad \text{in $\mD'\left(\mR^{d}\setminus \bigcup_{\ell\neq j}\overline{D_{\ell}}\right)$.} 
\end{equation*}
Combining this with \eqref{eq:Euler-Lagrange}, we conclude that $u_{*,j}\in W_{\mathrm{loc}}^{2,p}(\mR^{d}\setminus\bigcup_{\ell\neq j}\overline{D_{\ell}})$ for all $1\le p<\infty$ and 
\begin{equation*}
0 \le (\Delta - V_{j})u_{*,j} \le (V_{j})_{-}\chi_{K_{j}} \quad \text{a.e. in $\mR^{d}\setminus\bigcup_{\ell\neq j}\overline{D_{\ell}}$.} 
\end{equation*} 
Hence, by Sobolev embedding, it admits a $C_{\rm loc}^{1,\alpha}$ representative in $\mR^{d}\setminus\bigcup_{\ell\neq j}\overline{D_{\ell}}$ for every $0<\alpha<1$. 
Applying \cite[Lemma~7.7]{GT01Elliptic} to $\nabla u_{*,j}$ yields $\Delta u_{*,j}=0$ a.e. in $K_{j}$, then 
\begin{equation}
\Delta u_{*,j} - V_{j}u_{*,j}\chi_{\{u_{*,j}<1\}}=0 \text{ a.e. in $\mR^{d}\setminus\bigcup_{\ell\neq j}\overline{D_{\ell}}$} \quad\text{and}\quad V_{j}\le0 \text{ a.e. in $K_{j}$.} \label{eq:1-phase-EL}
\end{equation} 
This is precisely a Lewy-Stampacchia-type inequality for the one-phase setting (cf. \cite[Theorem~1.1]{KKO26population}). 

From \eqref{eq:Euler-Lagrange}, it is straightforward to check that 
\begin{equation}
-(V_{k})_{-}\chi_{K_{k}} \le (\Delta-V_{j})u_{*,j}  - (\Delta-V_{k})u_{*,k} \le (V_{j})_{-}\chi_{K_{j}} \quad \text{in $\mD'\left(\mR^{d}\setminus\bigcup_{\ell\neq j,k}\overline{D_{\ell}}\right)$.} \label{eq:2-phase-EL}
\end{equation}
This shows that $\Delta(u_{*,j}-u_{*,k})$ is bounded in $\mR^{d}\setminus\bigcup_{\ell\neq j,k}\overline{D_{\ell}}$, and therefore $u_{*,j}-u_{*,k}\in W_{\rm loc}^{2,p}(\mR^{d}\setminus\bigcup_{\ell\neq j,k}\overline{D_{\ell}})$ for all $1\le p<\infty$, and \eqref{eq:2-phase-EL} holds a.e. in $\mR^{d}\setminus\bigcup_{\ell\neq j,k}\overline{D_{\ell}}$. Hence, by Sobolev embedding, it admits a $C_{\rm loc}^{1,\alpha}$ representative in $\mR^{d}\setminus\bigcup_{\ell\neq j,k}\overline{D_{\ell}}$ for every $0<\alpha<1$. It follows immediately that  
\begin{equation}
(u_{*,j}-u_{*,k})_{+} = u_{*,j} ,\quad (u_{*,j}-u_{*,k})_{-} = u_{*,k} \quad \text{belongs to $C_{\rm loc}^{0,1}\left(\mR^{d}\setminus\bigcup_{\ell\neq j,k}\overline{D_{\ell}}\right)$.} \label{eq:Lipschitz-regularity} 
\end{equation}
This completes the proof of \ref{itm:two-phase} and \ref{itm:multi-phase}. 

However, we emphasize that the continuity of $\bfu_{*}$ on $\mR^{d}$, especially when crossing the free boundaries $\partial D_{i}\cap\partial D_{j} \cap \partial D_{k}$ for any distinct triple $(i,j,k)$, is obtained using techniques from potential analysis (see \cite{AG01Potential} for background). More precisely, we follow the approach in \cite[Corollary~3.4]{KSS26Multiphase}, which relies on general properties of subharmonic functions, such as \cite[Theorem~4.1.8]{Hoermander_book_1}, together with the Kato's inequality (see, e.g., \cite{BP04KatoInequality} or \cite[Corollary~2.3]{GS12TwoPhaseQD} for a short alternative proof). 

From \eqref{eq:2-phase-EL} we reach the Euler-Lagrange equation 
\begin{equation}
\Delta(u_{*,j}-u_{*,k}) = V_{j}u_{*,j} - V_{k}u_{*,k} \quad \text{a.e. in $\mR^{d}\setminus\left(K_{j}\cup K_{k}\cup\bigcup_{\ell\neq j,k}\overline{D_{\ell}}\right)$,} \label{eq:neighborhood}
\end{equation}
which corresponds to the two-phase problem described in \cite[Section~1.2.4]{PSU12FreeBoundary}. 
In view of \eqref{eq:Lipschitz-regularity}, a direct application of Schauder's estimate yields the following $C_{\rm loc}^{2,\alpha}$ regularity of $u_{*,j} - u_{*,k}$ in $\mR^{d}\setminus(K_{j}\cup K_{k}\cup\bigcup_{\ell\neq j,k}\overline{D_{\ell}})$, under mild assumptions on $\bfV$: 

\begin{corollary}\label{cor:C11-regularity} 
Let $\bfV = (V_{1},\cdots,V_{m}) \in (L^{\infty}(\mR^{d}))^{m}$. Suppose that $\bfu_{*}=(u_{*,1},\cdots,u_{*,m})$ is a minimizer of \eqref{eq:main-minimizing} in $\mS_{m}$. If $V_{j},V_{k}\in C_{\rm loc}^{\alpha}(U)$ for some open set $U\subset \mR^{d}\setminus(K_{j}\cup K_{k}\cup\bigcup_{\ell\neq j,k}\overline{D_{\ell}})$, then $u_{*,j}-u_{*,k}\in C_{\rm loc}^{2,\alpha}(U)$. 
\end{corollary}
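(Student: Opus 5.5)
Set $w:=u_{*,j}-u_{*,k}$ and $f:=V_{j}u_{*,j}-V_{k}u_{*,k}$. The plan is a Schauder bootstrap on \eqref{eq:neighborhood}: first upgrade $w$ to a strong $W^{2,p}$ solution on $U$, then show the right-hand side $f$ is $C^{\alpha}_{\rm loc}(U)$, and finally apply interior Schauder estimates.

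\emph{Step 1 (strong solution).} Since $U$ is open and contained in $\mR^{d}\setminus\bigcup_{\ell\neq j,k}\overline{D_{\ell}}$, the discussion after \Cref{thm:1} applies on $U$. It gives $w\in W^{2,p}_{\rm loc}(U)$ for all $1\le p<\infty$. By \eqref{eq:neighborhood}, $\Delta w=f$ a.e. in $U$, because $U$ avoids $K_{j}\cup K_{k}$.

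\emph{Step 2 (Hölder continuity of $f$).} By \eqref{eq:Lipschitz-regularity}, $u_{*,j}=w_{+}$ and $u_{*,k}=w_{-}$ are locally Lipschitz on $U$, hence belong to $C^{\alpha}_{\rm loc}(U)$. The potentials $V_{j},V_{k}$ are in $C^{\alpha}_{\rm loc}(U)$ by hypothesis. Products of locally bounded $C^{\alpha}$ functions are $C^{\alpha}$ on compact subsets, since
\[
|ab(x)-ab(y)|\le \|a\|_{L^\infty}|b(x)-b(y)|+\|b\|_{L^\infty}|a(x)-a(y)|.
\]
Therefore $f\in C^{\alpha}_{\rm loc}(U)$. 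The functions $u_{*,j},u_{*,k}$ are continuous, with values in $[0,1]$, so $f$ is a genuine continuous function and the a.e. equation can be read pointwise on the right-hand side.

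\emph{Step 3 (Schauder).} Fix a ball $B\Subset U$. Let $v$ be the Newtonian potential of $f\chi_{B}$, so that $v\in C^{2,\alpha}_{\rm loc}(B)$ with $\Delta v=f$ in $B$, by \cite[Lemma~4.4]{GT01Elliptic} (stated there for $f$ Hölder on a bounded domain; in case of doubt apply it on a slightly smaller ball). Then $h:=w-v\in W^{2,p}_{\rm loc}(B)$ satisfies $\Delta h=0$ a.e., hence in the distributional sense. By Weyl's lemma $h$ is smooth. Thus $w=v+h\in C^{2,\alpha}_{\rm loc}(B)$. Covering $U$ by such balls gives $w\in C^{2,\alpha}_{\rm loc}(U)$.

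The only point needing some care is Step 1: passing from the distributional two-sided inequality \eqref{eq:2-phase-EL} to a strong solution. This is exactly what the $W^{2,p}_{\rm loc}$ conclusion stated after \Cref{thm:1} provides, via Calderón–Zygmund estimates for a bounded Laplacian. After that the argument is routine.
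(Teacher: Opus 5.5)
Your proposal is correct and follows the paper's route. The paper also obtains the corollary by observing, via \eqref{eq:Lipschitz-regularity}, that the right-hand side $V_{j}u_{*,j}-V_{k}u_{*,k}$ of \eqref{eq:neighborhood} is locally $C^{\alpha}$ on $U$, and then applying interior Schauder estimates to the $W^{2,p}_{\rm loc}$ solution $u_{*,j}-u_{*,k}$; your Newtonian-potential plus Weyl's-lemma argument is simply one standard way of carrying out that Schauder step.
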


If one carefully inspects the proof of \Cref{thm:1}, one sees that, for each $j = 1, \dots, m$,
\begin{equation}
\Delta \left( u_{*,j} - \sum_{\ell \ne j} u_{*,\ell} \right) \quad \text{is a signed Radon measure.} \label{eq:generalized-normal-derivative}
\end{equation}
By a formal integration by parts, the Radon measure in \eqref{eq:generalized-normal-derivative} can be interpreted as a generalized normal derivative, even though in potential theory it is typically represented by $\Delta$ rather than $\partial_\nu$, see, for example, the description of harmonic measure in terms of (classical) balayage \cite[(2.6)]{Gus04LecturesBalayage}. 
However, it does not seem possible to ensure that $\Delta u_{*,j}$ is well defined, especially across the free boundaries $\partial D_{j}$. This prevents us from obtaining an analogue of \cite[Proposition~1.7]{KKO26population} in the case $m \ge 2$. 

It should be noted that applying the same argument as in \cite[Theorem~1.6]{KKO26population} yields the following $C^{1,1}$-regularity of $u_{*,j}$ across the free boundary $\partial K_{j}$, which holds even without assuming $C^{\alpha}$ regularity of $V_{j}$ (cf. \Cref{cor:C11-regularity}): 

\begin{corollary}\label{cor:C11}
Let $\bfV = (V_{1},\cdots,V_{m}) \in (L^{\infty}(\mR^{d}))^{m}$. Suppose that $\bfu_{*}=(u_{*,1},\cdots,u_{*,m})$ is a minimizer of \eqref{eq:main-minimizing} in $\mS_{m}$. If $K_{j}\neq\emptyset$ for some $j=1,\cdots,m$, then  
\begin{equation*}
0 \le 1-u_{*,j} \le C_{1}{\rm dist}_{\{u_{*,j}=1\}}(x)^{2} \quad \text{and} \quad \abs{\nabla u_{*,j}} \le C_{2}{\rm dist}_{\{u_{*,j}=1\}}(x)
\end{equation*}
for all $x\in\mR^{d}$, where 
\begin{equation*}
C_{1} = \frac{2^{d}\norm{(V_{j})_{-}}_{L^{\infty}(\mR^{d})} + \norm{(V_{j})_{+}}_{L^{\infty}(\mR^{d})}}{2d}, \quad C_{2} = 2^{d-1}\norm{(V_{j})_{-}}_{L^{\infty}(\mR^{d})} + \frac{d\norm{V_{j}}_{L^{\infty}(\mR^{d})}}{d+1}, 
\end{equation*}
and ${\rm dist}_{\{u_{*,j}=1\}}(x)=\inf_{y\in\{u_{*,j}=1\}}\abs{x-y}$ for all $x\in\mR^{d}$. 
\end{corollary}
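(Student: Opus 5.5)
Set $w:=1-u_{*,j}\ge 0$ and, for a point $x$, write $r:={\rm dist}_{K_{j}}(x)$. Two cases are trivial: if $x\in K_j$ both bounds read $0\le 0$, and if $r=\infty$ there is nothing to prove. So assume $0<r<\infty$ and pick $y\in K_j$ with $\abs{x-y}=r$, using that $K_j$ is closed because $u_{*,j}$ is continuous (\Cref{thm:1}).

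The key structural fact I will use is that near $K_j$ the other species are absent, so the problem is one-phase. Indeed, $u_{*,j}=1$ on $K_j$, and by continuity $u_{*,j}>0$ on a neighbourhood of $K_j$. Segregation then forces $u_{*,\ell}=0$ on that neighbourhood for every $\ell\ne j$, so the neighbourhood lies in $\mR^d\setminus\bigcup_{\ell\ne j}\overline{D_\ell}$. There \eqref{eq:1-phase-EL} applies, and it gives
\[
\Delta w=-V_ju_{*,j}\chi_{\{u_{*,j}<1\}} \ \text{a.e.},\qquad \abs{\Delta w}\le \norm{V_j}_{L^\infty},\qquad V_j\le 0 \text{ a.e. on } K_j .
\]
The main obstacle is that the estimate must hold at every $x\in\mR^d$, including points near $\partial D_j$, where $u_{*,j}$ meets other species and no PDE holds in a ball of radius comparable to $r$.

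To get a uniform one-sided inequality valid everywhere, I plan to use the global Euler--Lagrange inequality \eqref{eq:Euler-Lagrange}. On $\{u_{*,j}>0\}$ it gives $(\Delta-V_j)u_{*,j}\le (V_j)_-\chi_{K_j}$. Across $\partial D_j$, $u_{*,j}$ is only superharmonic-like, since it is the positive part of $u_{*,j}-\sum_{\ell\ne j}u_{*,\ell}$. Kato's inequality applied to this difference then yields, in $\mD'(\mR^d)$,
\[
\Delta u_{*,j}\le (V_j)_{-}+\norm{(V_j)_{+}}_{L^\infty}.
\]
Consequently $\Delta w\ge -\bigl(\norm{(V_j)_-}_{L^\infty}+\norm{(V_j)_+}_{L^\infty}\bigr)$ globally. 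This is exactly the one-sided bound needed for a quadratic growth estimate at a point of the zero set of $w$.

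With that in hand I will follow \cite[Theorem~1.6]{KKO26population} in two steps.

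\textbf{Quadratic bound.} At $y\in K_j$ we have $w(y)=0$ and $w\ge0$. The function $w+\frac{M}{2d}\abs{\cdot-y}^2$ is subharmonic, where $M$ is the constant in the Kato bound. The mean value inequality on $B_{2r}(y)$ (or on $B_r(x)$) then controls the averages of $w$. Since $w$ attains its minimum at $y$, I will combine this with a Harnack-type comparison: average over $B_r(x)\subset B_{2r}(y)$, and use the volume ratio $2^d$. This should give $w(x)\le C_1 r^2$. The factor $2^d\norm{(V_j)_-}$ in $C_1$ indicates that the negative part is handled through the volume ratio, while $\norm{(V_j)_+}/(2d)$ comes from the explicit paraboloid.

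\textbf{Gradient bound.} In $B_r(x)$ the set $K_j$ is absent. If $B_r(x)$ also avoids the other species, then $\abs{\Delta w}\le\norm{V_j}_{L^\infty}$ there, and the standard interior gradient estimate for Poisson's equation gives
\[
\abs{\nabla w(x)}\le \frac{d}{r}\sup_{B_r(x)}w+\frac{d}{d+1}\,r\norm{V_j}_{L^\infty}.
\]
Inserting the quadratic bound $\sup_{B_r(x)}w\le C_1(2r)^2$ then produces the stated $C_2 r$. If other species do enter $B_r(x)$, I would instead use that $x\in\overline{D_j}$, and work in the ball where $u_{*,j}>0$, or note that $\nabla u_{*,j}=0$ a.e. off $D_j$. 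This reduces to a smaller ball, and I still need to verify that this reduction is compatible with the constants.
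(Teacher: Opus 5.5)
Your reduction to the one-phase equation \eqref{eq:1-phase-EL} near $K_j$ is correct, and it is essentially all the paper relies on: its justification is simply to rerun the one-phase argument of \cite[Theorem~1.6]{KKO26population}, which rests on \eqref{eq:1-phase-EL}. There one has $-\norm{(V_j)_+}_{L^\infty(\mR^d)}\le\Delta w\le\norm{(V_j)_-}_{L^\infty(\mR^d)}$ a.e. For $x$ with $B_{2r}(y)\subset\mR^d\setminus\bigcup_{\ell\neq j}\overline{D_\ell}$ this works, but it needs \emph{both} one-sided bounds. The super-mean-value property of $w-\frac{1}{2d}\norm{(V_j)_-}_{L^\infty(\mR^d)}\abs{\cdot-y}^2$ on $B_{2r}(y)$, together with $w(y)=0$, is what makes the average of $w$ over $B_{2r}(y)$ of order $r^2$. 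Via the volume ratio, this produces the term $2^d\norm{(V_j)_-}_{L^\infty(\mR^d)}$ in $C_1$, and only then does the sub-mean-value property at $x$ finish the job. Your sketch only ever uses a lower bound $\Delta w\ge-M$. That alone gives no quadratic growth at a zero of $w$: $\abs{z_1}$ is subharmonic, nonnegative and vanishes at $0$. Also, in the gradient step, $\frac{d}{r}\sup_{B_r(x)}w\le 4dC_1r$ already overshoots $C_2r$.

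\textbf{The gap: the Kato step is backwards.} This is the step meant to cover all remaining $x$. Kato's inequality for a positive part is a \emph{lower} bound, $\Delta f_+\ge\chi_{\{f>0\}}\Delta f$. But \eqref{eq:Euler-Lagrange} bounds $\Delta\bigl(u_{*,j}-\sum_{\ell\neq j}u_{*,\ell}\bigr)$ only from above, so no upper bound on $\Delta u_{*,j}$ follows. Used correctly (e.g.\ for $m=2$, where $u_{*,j}-u_{*,k}\in W^{2,p}_{\rm loc}(\mR^d)$), Kato gives the opposite estimate $\Delta u_{*,j}\ge-\norm{(V_j)_-}_{L^\infty(\mR^d)}$.

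Your claimed inequality is in fact false. Near a two-phase point of $\partial D_j\cap\partial D_k$ one has $u_{*,j}=(u_{*,j}-u_{*,k})_+$ with $u_{*,j}-u_{*,k}\in C^{1,\alpha}$. So wherever $\nabla(u_{*,j}-u_{*,k})\neq0$, the distribution $\Delta u_{*,j}$ carries the positive singular part $\abs{\nabla(u_{*,j}-u_{*,k})}\,\mathcal{H}^{d-1}$ on $\partial D_j$. Thus $u_{*,j}$ is subharmonic-like (not superharmonic-like) across $\partial D_j$, and $w$ has a concave kink there. The sub-mean-value inequality at $x$ therefore fails as soon as $B_r(x)$ crosses $\partial D_j$. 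Your fallback for the gradient (shrinking to a ball inside $D_j$) brings in a factor $d/{\rm dist}(x,\partial D_j)$ that blows up.

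\textbf{This cannot be repaired with constants depending on $V_j$ alone.} Suppose $u_{*,j}$ must vanish on a small ball $B_{\rho_0}$ inside a region where $V_j\equiv-\beta^2$, as a competitor with a very deep, very narrow well there would force. For $d=3$, the one-phase profile $c\sin(\beta(\abs{x}-\rho_0))/\abs{x}$ reaches $1$ with zero slope at $\abs{x}=t$, where $\tan(\beta(t-\rho_0))=\beta t$, i.e.\ $t-\rho_0\approx(3\rho_0/\beta^2)^{1/3}$. On $\partial B_{\rho_0}$ one has $1-u_{*,j}=1$ and $\abs{\nabla u_{*,j}}\approx1/\rho_0$, while $C_1\,{\rm dist}^2$ and $C_2\,{\rm dist}$ tend to $0$ as $\rho_0\to0$.

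So your argument, like the one-phase argument the paper cites, really establishes the estimate only for $x$ close enough to $K_j$ that the balls used stay inside $\mR^d\setminus\bigcup_{\ell\neq j}\overline{D_\ell}$. Extending it to every $x\in\mR^d$ would require a lower bound on ${\rm dist}(K_j,\{u_{*,j}=0\})$ in terms of $V_j$, which neither argument provides.
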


\subsection{Regularity of free boundaries\label{sec:regularity-FB}} 

Applying the same argument as in \cite[Theorem~1.8]{KKO26population} yields the following result concerning the geometry of the free boundary $\partial K_j$: 

\begin{corollary}\label{cor:FB}
Suppose that all assumptions in \Cref{cor:C11} hold. If $K_{j}$ is a Lipschitz domain and $V_{j}$ is $C^{\alpha}$ for some $0<\alpha<1$ near $x_0\in\partial K_{j}$ and $V_{j}(x_{0})<0$, then $u_{*,j}$ reaches the optimal regularity $C^{1,1}$ near $x_0$ and the following holds true: 
\begin{enumerate}
\item [(a)] If $d=2$ and $\partial K_{j}$ is piecewise $C^{1}$, then $\partial K_{j}$ is $C^{1}$ near $x_0$. 
\item [(b)] If $d=2$ and $K_{j}$ is convex, then $\partial K_{j}$ is $C^{1}$ near $x_0$. 
\item [(c)] If $d\ge 3$, then $x_0$ is not an edge point described in \cite[Definition~1.3]{SS25vanishingcontrast}. 
\end{enumerate}
\end{corollary}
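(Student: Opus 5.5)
The plan is to localize near $x_0$ and reduce to the one-phase obstacle setting of \cite[Theorem~1.8]{KKO26population}, for which the conclusions (a)--(c) are already available. The key point is that near a point of $\partial K_j$ the other species are absent, so $u_{*,j}$ solves a scalar obstacle-type problem there.

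\textbf{Step 1: the other phases are absent near $x_0$.} Since $x_0\in\partial K_j\subset K_j$ (closed, by the continuity in \Cref{thm:1}), we have $u_{*,j}(x_0)=1$. By continuity, $u_{*,j}>1/2$ on a ball $B_r(x_0)$. The segregation constraint then forces $u_{*,\ell}=0$ on $B_r(x_0)$ for every $\ell\neq j$. Hence $B_r(x_0)\subset\mR^d\setminus\bigcup_{\ell\neq j}\overline{D_\ell}$, after shrinking $r$ slightly if needed.

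\textbf{Step 2: reduction to a one-phase problem.} On $B_r(x_0)$, \eqref{eq:1-phase-EL} gives $u_{*,j}\in W^{2,p}_{\rm loc}$ and
\[
\Delta u_{*,j}=V_ju_{*,j}\chi_{\{u_{*,j}<1\}}\quad\text{a.e.},
\]
together with $V_j\le0$ a.e. on $K_j$. Set $w:=1-u_{*,j}\ge0$. Then
\[
\Delta w=-V_j(1-w)\chi_{\{w>0\}}.
\]
Because $V_j$ is $C^\alpha$ with $V_j(x_0)<0$, we can shrink $r$ so that $f:=-V_j(1-w)\ge c>0$ on $B_r(x_0)$. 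This uses $w$ small near $x_0$, which follows from \Cref{cor:C11}. Moreover, $f$ is $C^\alpha$, since $w$ is $C^{1,\alpha}$. Thus $w$ solves the classical obstacle problem $\Delta w=f\chi_{\{w>0\}}$ with Hölder, strictly positive right-hand side. This is exactly the local setting treated in \cite[Theorem~1.8]{KKO26population}.

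\textbf{Step 3: apply the one-phase results.} \Cref{cor:C11} supplies the quadratic growth bound $w\le C\,{\rm dist}^2$ and the gradient bound. The standard obstacle-problem theory \cite{PSU12FreeBoundary} then gives optimal $C^{1,1}$ regularity near $x_0$. With the Lipschitz-domain hypothesis on $K_j$, the coincidence set $\{w=0\}=K_j$ has positive density at $x_0$. Consequently, blow-ups at $x_0$ are half-space or polynomial solutions, and the geometric arguments of \cite[Theorem~1.8]{KKO26population} apply verbatim. In $d=2$, piecewise $C^1$ or convexity rules out corners, so blow-ups are half-planes and $\partial K_j$ is $C^1$. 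For $d\ge3$, the vanishing-contrast argument of \cite{SS25vanishingcontrast} excludes edge points.

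\textbf{Expected obstacle.} The main difficulty is Step 2: verifying that the right-hand side $f$ really has the regularity and sign needed to invoke the one-phase theorem. This requires controlling $w$ in $C^\alpha$ and keeping $f$ away from zero. Shrinking the ball together with \Cref{cor:C11} should handle both. Everything after that is inherited from the one-phase case.
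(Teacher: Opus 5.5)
Your proposal is correct and follows essentially the same route as the paper, which obtains this corollary by noting that the other species vanish near $\partial K_j$, so $u_{*,j}$ satisfies the one-phase system \eqref{eq:1-phase-EL} there (i.e.\ $w=1-u_{*,j}$ solves $\Delta w=-V_ju_{*,j}\chi_{\{w>0\}}$ with a positive H\"older right-hand side near $x_0$), and then runs the argument of \cite[Theorem~1.8]{KKO26population} verbatim. Your Step~3 commentary is loose in two places---positive density of $K_j$ at $x_0$ is what excludes polynomial blow-ups, rather than implying the dichotomy; and the piecewise $C^{1}$ or convexity hypothesis does not itself rule out corners but only makes the blow-up of $K_j$ a sector, which the obstacle-problem analysis then forces to be a half-plane---but since you and the paper both defer these geometric conclusions to \cite{KKO26population}, this does not affect the argument.
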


Both \Cref{cor:FB} and \cite[Theorem~1.8]{KKO26population} are proved using free boundary methods developed in \cite{SS25vanishingcontrast}, which have also been applied to scattering problems in \cite{KSS25AnisotropicII}. The analysis is based on a blow-up argument, made possible by introducing a suitable Weiss energy functional (cf. \cite[Section~3.5]{PSU12FreeBoundary}). After classifying the blow-ups in two dimensions, the result follows from nondegeneracy properties of no-sign obstacle problems (cf. \cite{PSU12FreeBoundary}) together with a weak flatness result.
In higher dimensions, the proof relies on Federer's dimension reduction argument (cf. \cite{Weiss99FB} or \cite[Lemma~10.9]{Velichkov23FB}).

In view of \eqref{eq:neighborhood} and the $C_{\rm loc}^{2,\alpha}$ regularity of $u_{*,j}-u_{*,k}$ given in \Cref{cor:C11-regularity}, it is natural to investigate the free boundary $(\partial D_{j}\cap\partial D_{k})\setminus\bigcup_{\ell\neq j,k}\overline{D_{\ell}}$ for $j\neq k$. However, there is no a priori reason to assume that $\abs{\nabla(u_{*,j}-u_{*,k})(x_0)}\neq 0$. Consequently, the free boundary results for two-phase problems in \cite{DSFS14TwoPhaseProblemLinear} cannot be applied directly to this setting. This type of problem was first studied in \cite{Caffarelli1987FreeBoundaryI}, see also \cite{STV19TwoPhaseProblemFullyLinear} for the fully nonlinear case. 
Since the right-hand side of \eqref{eq:neighborhood} vanishes at $x_0$, the local regularity theory for two-phase free boundary problems \cite[Theorem~6.12]{PSU12FreeBoundary} is also not applicable. 
If $\abs{\nabla(u_{*,j}-u_{*,k})(x_0)}= 0$, one would need to extend the methods in \cite{SS25vanishingcontrast, KSS25AnisotropicII} to the two-phase setting, which appears to be challenging. We leave this problem for future investigation.

\subsection{Existence of minimizers in \texorpdfstring{$\mS_{m}$}{Sm}}

We now assume that $\bfV$ satisfies the following assumption. 

\begin{assumption}\label{assu:existence} 
Suppose that $\bfV=(V_{1},\cdots,V_{m})\in (L^{\infty}(\mR^{d}))^{m}$ satisfies 
\begin{equation}
\min_{\ell=1,\cdots,m}V_{\ell}(x) \ge \alpha^{2} \quad\text{for a.e. $x\in \mR^{d}\setminus B_{R}$} \label{eq:nonvanishing}
\end{equation}
and for each $j=1,\cdots,m$
\begin{equation*}
V_{j}(x)\le -\beta^{2} \quad\text{for a.e. $x\in G_{j}$}
\end{equation*}
for some $\alpha>0$, $\beta>0$, $R>0$ and nonempty bounded domains $G_{1},\cdots,G_{m}\subset B_{R}$ (``potential wells''). In addition, we assume that 
\begin{equation}
\beta^{2} > \max_{j=1,\cdots,m}\lambda^{*}(G_{j}) ,\quad \lambda^{*}(G):=\inf_{\phi\in C_{c}^{\infty}(G),\phi\not\equiv0}\frac{\norm{\nabla\phi}_{L^{2}(G)}^{2}}{\norm{\phi}_{L^{2}(G)}^{2}}. \label{eq:potential-well}
\end{equation}
\end{assumption}

It is well-known that the fundamental tone $\lambda^{*}(G)$ is positive for any bounded domain $G$. Let 
\begin{equation}
\mK = \left\{ u\in H^{1}(\mR^{d}) : 0\le u \le 1 \right\}. \label{eq:constraint-K}
\end{equation}
It is straightforward to see that $\mF_{\bfV}|_{\mK^{m}}$ (and so is $\mF_{\bfV}|_{\mS_{m}}$) is bounded below: 
\begin{equation*}
\mF_{\bfV}(\bfu) \ge \sum_{j=1}^{m}\int_{B_{R}} V_{j}\abs{u_{j}}^{2}\,\rmd\sfm \ge -\sum_{j=1}^{m}\abs{B_R}\norm{(V_{j})_{-}}_{L^{\infty}(\mR^{d})} \quad \text{for all $\bfu\in \mK^{m}$.}
\end{equation*}
Since the eigenvalue $\lambda^{*}(G_{j})$ is simple and has a positive eigenfunction $\phi_{j}^{*}\in H_{0}^{1}(G_{j})\cap C^{\infty}(G_{j})$ (cf. \cite[Theorem~8.38]{GT01Elliptic}), normalized such that its zero extension $\chi_{G_{j}}\phi_{j}^{*}\in H^{1}(\mR^{d})$ satisfies $0\le \chi_{G_{j}}\phi_{j}^{*} \le 1$, then we see that $(0,\cdots,0,\chi_{G_{j}}\phi_{j}^{*},0,\cdots,0) \in \mS_{m}$ and 
\begin{equation}
\begin{aligned}
& \mF_{\bfV}(0,\cdots,0,\chi_{G_{j}}\phi_{j}^{*},0,\cdots,0) = \int_{G_{j}}(\lambda^{*}(G_j)+V_{j})\abs{\phi_{j}^{*}}^{2}\,\rmd\sfm \\
&\quad \le (\lambda^{*}(G_{j})-\beta^{2})\int_{G_{j}}\abs{\phi_{j}^{*}}^{2}\,\rmd\sfm < 0. 
\end{aligned} \label{eq:nontrivial}
\end{equation} 
This shows that any minimizer $\bfu_{*}$ of \eqref{eq:main-minimizing} in $\mS_m$ must satisfy $\abs{\bfu_{*}}\not\equiv 0$. 

We now establish the existence of minimizer, stated below: 

\begin{theorem}\label{thm:existence} 
We assume that $\bfV$ satisfies \Cref{assu:existence}. Then there exists a minimizer $\bfu_{*}$ of \eqref{eq:main-minimizing} over $\mS_{m}$ such that 
\begin{equation}
u_{*,j}(x) \le v_{*,j}(x) \quad \text{for all $x\in\mR^{d}$,} \label{eq:exponential-decay1}
\end{equation}
where $v_{*,j}$ is the smallest minimizer of $\mF_{V_{j}}$ in $\mK$. 
\end{theorem}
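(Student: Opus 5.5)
We take a minimizing sequence for \eqref{eq:main-minimizing}, replace it by a truncated one that is dominated by the one-phase minimizers $v_{*,j}$, and pass to the limit using the lower-order control coming from \eqref{eq:nonvanishing}.

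\textbf{Step 1: the one-phase problem.} By the $m=1$ theory of \cite{KKO26population}, under \Cref{assu:existence} the set of minimizers of $\mF_{V_j}$ over $\mK$ is nonempty. The smallest such minimizer $v_{*,j}$ should exist because this set is a lattice. Indeed, if $v,w\in\mK$ then $\min\{v,w\}$ and $\max\{v,w\}$ lie in $\mK$, and
\begin{equation*}
\mF_{V_j}(\min\{v,w\})+\mF_{V_j}(\max\{v,w\})=\mF_{V_j}(v)+\mF_{V_j}(w).
\end{equation*}
Hence the minimum of two minimizers is again a minimizer. One then takes the infimum over a countable family and uses lower semicontinuity, together with the compactness described below.

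\textbf{Step 2: truncating a minimizing sequence.} Let $\bfu^{(n)}\in\mS_m$ be a minimizing sequence. Set $w^{(n)}_j:=\min\{u^{(n)}_j,v_{*,j}\}$. Then $\bfw^{(n)}\in\mS_m$, since segregation is preserved when functions are decreased. The lattice identity above gives
\begin{equation*}
\mF_{V_j}(w^{(n)}_j)=\mF_{V_j}(u^{(n)}_j)+\mF_{V_j}(v_{*,j})-\mF_{V_j}(\max\{u^{(n)}_j,v_{*,j}\})\le \mF_{V_j}(u^{(n)}_j),
\end{equation*}
because $\max\{u^{(n)}_j,v_{*,j}\}\in\mK$ and $v_{*,j}$ minimizes over $\mK$. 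So $\bfw^{(n)}$ is again a minimizing sequence, and it satisfies $w^{(n)}_j\le v_{*,j}$.

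\textbf{Step 3: a uniform bound and compactness.} Since $0\le w_j^{(n)}\le v_{*,j}\in H^1$, the $L^2$ norms are uniformly bounded. For the gradients,
\begin{equation*}
\norm{\nabla w_j^{(n)}}_{L^2}^2\le \mF_{V_j}(w_j^{(n)})+\norm{(V_j)_-}_{L^\infty}\abs{B_R}.
\end{equation*}
The sum of the energies is bounded, and each energy is bounded below, so each gradient norm is bounded. After passing to a subsequence, $\bfw^{(n)}\rightharpoonup \bfu_*$ weakly in $(H^1)^m$, strongly in $L^2_{\rm loc}$, and a.e.

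The tails are controlled by domination: $\int_{\mR^d\setminus B_\rho}\abs{w_j^{(n)}}^2\le\int_{\mR^d\setminus B_\rho}v_{*,j}^2\to0$ as $\rho\to\infty$, uniformly in $n$. Therefore $w_j^{(n)}\to u_{*,j}$ strongly in $L^2(\mR^d)$. The constraints $0\le u_{*,j}\le v_{*,j}\le1$ and $u_{*,i}u_{*,j}=0$ pass to the a.e. limit, so $\bfu_*\in\mS_m$. Finally, the $V$-term converges by strong $L^2$ convergence and the gradient term is weakly lower semicontinuous. Hence $\mF_{\bfV}(\bfu_*)\le\liminf_n\mF_{\bfV}(\bfw^{(n)})$, and $\bfu_*$ is a minimizer.

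\textbf{Step 4: the pointwise statement.} The inequality $u_{*,j}\le v_{*,j}$ holds a.e. By \Cref{thm:1}, $u_{*,j}$ has a continuous representative, and $v_{*,j}$ is continuous by the one-phase theory. Hence \eqref{eq:exponential-decay1} holds everywhere.

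\textbf{Main obstacle.} I expect the hardest part to be Step 1: producing a genuine smallest minimizer $v_{*,j}$, which requires both existence for $m=1$ and closure of the minimizer set under countable infima. This is also where the coercivity from \eqref{eq:nonvanishing} enters, preventing loss of mass at infinity in the one-phase problem. If \cite{KKO26population} provides this directly, the remaining steps are routine.
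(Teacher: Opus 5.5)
Your proposal is correct and follows essentially the same route as the paper: you truncate a minimizing sequence to $\min\{u_j^{(h)},v_{*,j}\}$, extract a subsequence converging strongly in $L^2$ and a.e., and pass the constraints to the limit. For the truncation inequality the paper cites \cite[Lemma~3.1(a)]{KKO26population} where you derive the lattice identity directly. For compactness you use the domination $w_j^{(n)}\le v_{*,j}\in L^2$ instead of the Bessel-type bound defining $\mK_0$ together with \cite[Remark~3.3]{KKO26population}. You also make explicit, via continuity, the upgrade from an a.e.\ to an everywhere inequality, which the paper leaves implicit. The existence of $v_{*,j}$ (your Step~1) is simply taken from \cite[Theorem~1.10]{KKO26population}.
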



\begin{remark}\label{rem:exponential-decay2} 
Since there exists a positive constant $C=C(R,d,\alpha)$ such that $v_{*,j}(x) \le C\abs{x}^{-\frac{d-1}{2}}e^{-\alpha\abs{x}}$ for all $x\in\mR^{d}\setminus\{0\}$, the minimizer in \Cref{thm:existence} satisfies $\norm{\bfu_{*}}_{L^\infty(\mR^d)}=1$, with the supremum attained\footnote{Otherwise, $\norm{\bfu_{*}}_{L^{\infty}(\mR^{d})}<1$, and hence $\mF_{\bfV}\left(\frac{\bfu_{*}}{\norm{\bfu_{*}}_{L^{\infty}(\mR^{d})}}\right) = \frac{1}{\norm{\bfu_{*}}_{L^{\infty}(\mR^{d})}^{2}}\overbrace{\mF_{\bfV}(\bfu_{*})}^{<\,0} < \mF_{\bfV}(\bfu_{*})$, which is a contradiction.}. In particular, the set $\{u_{*,j}=1\}$ is compact. 
\end{remark} 

\subsection{Co-existence of segregated species}

We now show that these segregated species can coexist, at least when the green oases for each species are sufficiently far apart, as stated in the following theorem. 

\begin{theorem}\label{propr:m-coexist} 
For each $j\in\{1,\cdots,m\}$, let $\breve{V}_{j}\in L^{\infty}(\mR^{d})$ satisfy
\begin{equation*}
\breve{V}_{j}(x)\ge\alpha_{j}^{2} \quad \text{for a.e. $x\in\mR^{d}\setminus B_{R_{j}}(0)$}
\end{equation*}
and 
\begin{equation*}
\breve{V}_{j}(x) \le -\beta_{j}^{2} < - \lambda^{*}(\breve{G}_{j}) \quad \text{for a.e. $x\in \breve{G}_{j}$} 
\end{equation*}
for some $\alpha_{j},\beta_{j},R_{j}>0$ and nonempty bounded domains $\breve{G}_{j}\subset B_{R_{j}}(0)$. Then there exist constants $r_{1},\cdots,r_{m}>0$, depending only on $d$ and $\breve{V}_{j}$, such that, if 
\begin{equation}
\abs{x^{(j)}-x^{(k)}} > r_{j}+r_{k} \quad \text{for all $j\neq k$,} \label{eq:mutually-far-away}
\end{equation}
then every minimizer $\bfu_{*}$ of \eqref{eq:main-minimizing} in $\mS_{m}$, with 
\begin{equation*}
V_{j}(x) = \breve{V}_{j}(x-x^{(j)}) \quad \text{for all $x\in\mR^{d}$,} 
\end{equation*}
satisfies 
\begin{equation*}
u_{*,j}\not\equiv 0 \quad \text{for all $j\in\{1,\cdots,m\}$.}
\end{equation*} 
\end{theorem}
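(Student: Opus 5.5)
The plan is to argue by contradiction with an explicit competitor. Suppose that $\bfu_{*}$ is a minimizer with $u_{*,j}\equiv 0$ for some $j$. I will build $\tilde\bfu\in\mS_{m}$ with $\mF_{\bfV}(\tilde\bfu)<\mF_{\bfV}(\bfu_{*})$ in two moves: first clear out a large ball around the oasis $x^{(j)}$ by cutting off all the other species there, then place a translated first Dirichlet eigenfunction of $\breve G_{j}$ inside $x^{(j)}+\breve G_{j}$. The condition \eqref{eq:mutually-far-away} is used only to guarantee that the ball around $x^{(j)}$ lies where every other $V_{k}$, $k\neq j$, is positive.

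\emph{Step 1 (the bump).} Let $\phi_{j}^{*}$ be the positive first eigenfunction on $\breve G_{j}$, normalized so that $0\le\chi_{\breve G_{j}}\phi_{j}^{*}\le 1$. Set $w_{j}(x):=\chi_{\breve G_{j}}\phi_{j}^{*}(x-x^{(j)})$. Exactly as in \eqref{eq:nontrivial},
\begin{equation*}
\mF_{V_{j}}(w_{j}) \le -\delta_{j}:=(\lambda^{*}(\breve G_{j})-\beta_{j}^{2})\int_{\breve G_{j}}\abs{\phi_{j}^{*}}^{2}\,\rmd\sfm<0,
\end{equation*}
and $\supp w_{j}\subset \overline{B_{R_{j}}(x^{(j)})}$. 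Since $u_{*,j}\equiv 0$, this bump costs nothing from species $j$ itself. The only requirement is that the other species vanish on $B_{R_{j}}(x^{(j)})$.

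\emph{Step 2 (cutting off the others).} Fix $\rho_{j}\ge R_{j}$ and choose Lipschitz functions $\theta,\zeta$ with $\theta^{2}+\zeta^{2}=1$, $\zeta\equiv1$ on $B_{\rho_{j}}(x^{(j)})$, $\zeta\equiv0$ outside $B_{2\rho_{j}}(x^{(j)})$, and $\abs{\nabla\theta}^{2}+\abs{\nabla\zeta}^{2}\le C_{d}\rho_{j}^{-2}$ (for instance a cosine/sine of a radial ramp). For $k\neq j$ set $\tilde u_{k}:=\theta u_{*,k}$, and set $\tilde u_{j}:=w_{j}$. Then $\tilde\bfu\in\mS_{m}$. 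The IMS localization identity gives
\begin{equation*}
\int\abs{\nabla(\theta u)}^{2}+\int\abs{\nabla(\zeta u)}^{2}=\int\abs{\nabla u}^{2}+\int(\abs{\nabla\theta}^{2}+\abs{\nabla\zeta}^{2})u^{2},
\end{equation*}
and therefore
\begin{equation*}
\mF_{V_{k}}(\theta u_{*,k}) \le \mF_{V_{k}}(u_{*,k})-\int V_{k}\zeta^{2}u_{*,k}^{2}\,\rmd\sfm-\int\abs{\nabla(\zeta u_{*,k})}^{2}\,\rmd\sfm+C_{d}\rho_{j}^{-2}\int_{B_{2\rho_{j}}(x^{(j)})}u_{*,k}^{2}\,\rmd\sfm .
\end{equation*}
Now choose $r_{j}\ge 2\rho_{j}$ and $r_{k}\ge R_{k}$. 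Then \eqref{eq:mutually-far-away} forces $B_{2\rho_{j}}(x^{(j)})\cap B_{R_{k}}(x^{(k)})=\emptyset$, so $V_{k}\ge\alpha_{k}^{2}$ on $B_{2\rho_{j}}(x^{(j)})$. Summing over $k$ and adding Step 1 yields
\begin{equation*}
\mF_{\bfV}(\tilde\bfu)\le\mF_{\bfV}(\bfu_{*})-\delta_{j}+\sum_{k\neq j}\Big(C_{d}\rho_{j}^{-2}\int_{B_{2\rho_{j}}}u_{*,k}^{2}-\alpha_{k}^{2}\int\zeta^{2}u_{*,k}^{2}-\int\abs{\nabla(\zeta u_{*,k})}^{2}\Big).
\end{equation*}

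\emph{Step 3 (main obstacle: absorbing the cutoff error).} The difficulty is the term $C_{d}\rho_{j}^{-2}\int_{\mathrm{annulus}}u_{*,k}^{2}$. The favourable terms only control $u_{*,k}$ where $\zeta$ is not small, and the crude bound $u_{*,k}\le1$ gives an annulus volume growing like $\rho_{j}^{d}$. My first attempt will use a Poincaré-type inequality on the annulus: the combination $\int\abs{\nabla(\zeta u)}^{2}+\alpha_{k}^{2}\int\zeta^{2}u^{2}$ controls $\int_{\mathrm{annulus}}u^{2}$ up to a constant, provided the cutoff is spread over a width $\gg \alpha_{k}^{-1}$. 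Equivalently, I will take a logarithmically slowly varying or multi-annulus averaged cutoff (choosing the best of many dyadic annuli), so that the error is $o(1)$ relative to the gain. If this fails, the fallback is to prove a decay estimate for $u_{*,k}$ away from $\overline{B_{R_{k}}(x^{(k)})}$. On the open set where $V_{k}>0$, \eqref{eq:Euler-Lagrange} combined with Kato's inequality should give $(\Delta-\alpha_{k}^{2})u_{*,k}\ge0$ there, and a comparison with $e^{-\alpha_{k}\abs{x-x^{(k)}}}$-type barriers then makes $\int_{B_{2\rho_{j}}(x^{(j)})}u_{*,k}^{2}$ exponentially small in the separation. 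In either route one must check that the constants can be made to depend on $\breve V_{j}$ and $d$ as claimed; otherwise one takes $r_{j}$ depending on $\min_{k}\alpha_{k}$.

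Once the bracket is at most $\delta_{j}/2$, we get $\mF_{\bfV}(\tilde\bfu)<\mF_{\bfV}(\bfu_{*})$, contradicting minimality. Hence $u_{*,j}\not\equiv0$ for every $j$.
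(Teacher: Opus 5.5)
Your Steps 1 and 2 are sound: the IMS identity, $\tilde\bfu\in\mS_m$, and the favourable sign of the $\zeta$-terms on $B_{2\rho_j}(x^{(j)})$, where $V_k\ge\alpha_k^2$ for $k\ne j$, all check out. But the proof is not finished, because Step 3 is where it must be closed and you leave it open.

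You need a bound on $\rho_j^{-2}\sum_{k\neq j}\int_{A}u_{*,k}^2\,\rmd\sfm$, with $A:=B_{2\rho_j}(x^{(j)})\setminus B_{\rho_j}(x^{(j)})$. This bound must be uniform over all admissible positions and all minimizers, since $\rho_j$ (hence $r_j$) is fixed in advance. The missing observation is that minimality gives this for free. Since $\mathbf 0\in\mS_m$ we have $\mF_\bfV(\bfu_*)\le 0$, and since $0\le u_{*,k}\le 1$,
\begin{equation*}
\mF_{V_k}(u_{*,k})\ge\norm{\nabla u_{*,k}}_{L^2(\mR^d)}^2+\alpha_k^2\int_{\mR^d\setminus B_{R_k}(x^{(k)})}u_{*,k}^2\,\rmd\sfm-\norm{(\breve V_k)_-}_{L^\infty(\mR^d)}\abs{B_{R_k}}.
\end{equation*}
Hence $\sum_k\alpha_k^2\int_{\mR^d\setminus B_{R_k}(x^{(k)})}u_{*,k}^2\,\rmd\sfm\le M:=\sum_{k}\norm{(\breve V_k)_-}_{L^\infty(\mR^d)}\abs{B_{R_k}}$. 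Because $A\cap B_{R_k}(x^{(k)})=\emptyset$ for $k\ne j$, the cutoff error is at most $C_d\rho_j^{-2}M/\min_k\alpha_k^2$. This is below $\delta_j/2$ once $\rho_j$ is large in terms of $d$ and $\breve V_1,\dots,\breve V_m$, which is the same kind of dependence the paper's $r_\ell$ have. With this one line your argument is complete. Your worry about an annulus volume of order $\rho_j^{d}$ arises only because you bounded $u_{*,k}$ by $1$ instead of using the energy.

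Neither of your proposed routes replaces this as written.

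\emph{Route 1.} The Poincar\'e-type claim is false. The quantity $\int\abs{\nabla(\zeta u)}^2+\alpha_k^2\int\zeta^2u^2$ degenerates near $\partial B_{2\rho_j}(x^{(j)})$, where $\zeta\to0$. For $u$ concentrated in a unit-width shell there (so $\zeta\lesssim\rho_j^{-1}$), it is $O(\rho_j^{d-3})$, while $\int_Au^2$ is of order $\rho_j^{d-1}$. The dyadic pigeonhole variant needs a bound on the total $L^2$ mass, which is precisely the estimate above.

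\emph{Route 2.} This can be made to work, but not through \eqref{eq:Euler-Lagrange} and Kato's inequality. For $m\ge3$, that inequality bounds $\Delta(u_{*,i}-\sum_{\ell\ne i}u_{*,\ell})$ only from above, so Kato yields a lower bound for $\Delta\sum_{\ell\ne i}u_{*,\ell}$, not for a single $\Delta u_{*,k}$. The subsolution property $(\Delta-V_k)u_{*,k}\ge0$ in $\mD'(\mR^d)$ follows instead from the admissible one-sided variation $u_{*,k}\mapsto(u_{*,k}-\delta\psi)_+$. After that, the barrier comparison in the proof of \Cref{thm:well-posed}\ref{itm:singular-limit-b} applies.

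For comparison, the paper never modifies $\bfu_*$. It uses $\mS_m\subset\mK^m$ to bound $\mF_\bfV(\bfu_*)\ge\sum_{\ell\ne j}\mF_{V_\ell}(v_{*,\ell})$ via the single-species minimizers. It then tests against $(v_{*,1}\psi_{r_1},\dots,v_{*,m}\psi_{r_m})\in\mS_m$, whose truncation errors are fixed numbers (by translation invariance) tending to $0$ as $r_\ell\to\infty$. So no estimate uniform over minimizers is ever needed. Your repaired surgery argument is slightly more self-contained, since it uses only the Dirichlet eigenfunction of $\breve G_j$ and the a priori energy bound, at the price of that extra uniform estimate.
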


\subsection{Variational penalization and singular limit}

We first establish the existence of minimizers for the penalized problem \eqref{eq:energy1} and study some of their basic properties.

\begin{theorem}\label{thm:well-posed} 
Assume that $\bfV\in (L^{\infty}(\mR^{d}))^m$ satisfies \eqref{eq:nonvanishing} for some $\alpha>0$ and $R>0$, and define
\begin{equation*}
\norm{\bfV_{-}}_{L^{\infty}(\mR^{d})}:=\max_{1\leq i\leq m}\norm{(V_{i})_{-}}_{L^\infty(\mR^{d})}. 
\end{equation*}
Then the following holds: 
\begin{enumerate} 
\renewcommand{\labelenumi}{\theenumi}
\renewcommand{\theenumi}{\rm (\roman{enumi})}
\item \label{itm:singular-limit-a} For any given $\overline{\varepsilon}\in(0,(4\norm{\bfV_{-}}_{L^{\infty}(\mR^{d})})^{-1})$, the sublevel set\footnote{It is nonempty, since $\mathcal{E}_{\bfV,\varepsilon,\kappa}(\mathbf{0})=0$.} 
\begin{equation*}
\left\{\bfu\in (H_{+}^{1}(\mR^{d}))^{m}: \calE_{\bfV,\varepsilon,\kappa}(\bfu)\leq 0\right\}
\end{equation*}
is bounded in $(H^1(\mathbb R^d))^m$, uniformly with respect to $\varepsilon\in(0,\bar\varepsilon)$ and $\kappa>0$. 
In particular, for any fixed $\varepsilon,\kappa>0$
with $\varepsilon \norm{\bfV_{-}}_{L^{\infty}(\mR^{d})} \le 1/4$,
$\calE_{\bfV,\varepsilon,\kappa}$ admits a global minimizer
over $(H_{+}^{1}(\mR^{d}))^{m}$. 
\item \label{itm:singular-limit-b} 
For any given $\varepsilon,\kappa>0$
with $\varepsilon \norm{\bfV_{-}}_{L^{\infty}(\mR^{d})} \le 1/4$,
every global minimizer of $\calE_{\bfV,\varepsilon,\kappa}$ over $(H_{+}^{1}(\mR^{d}))^{m}$ belongs to $W_{\rm loc}^{2,p}(\mR^{d})$ for all $p<\infty$, and hence admits a representative in $C_{\rm loc}^{1}(\mR^{d})$, satisfying 
\begin{equation}
\Delta u_{(\varepsilon,\kappa),i} - V_{i}u_{(\varepsilon,\kappa),i} = \frac{1}{\varepsilon}(u_{(\varepsilon,\kappa),i}-1)_{+} +\kappa u_{(\varepsilon,\kappa),i}\sum_{j\neq i}u_{(\varepsilon,\kappa),j}^{2} \quad \text{a.e. in $\mR^{d}$.} \label{eq:EL}
\end{equation}
Moreover, there exists a constant $C_{R,d,\alpha}>0$, which is independent of $\varepsilon$ and $\kappa$, such that every global minimizer $\bfu_{(\varepsilon,\kappa)}$ of $\calE_{\bfV,\varepsilon,\kappa}$ over $(H_{+}^{1}(\mR^{d}))^{m}$ satisfies 
\begin{equation}
\max_{i=1,\cdots,m} u_{(\varepsilon,\kappa),i}(x) \le \frac{1}{1-\varepsilon\norm{\bfV_{-}}_{L^{\infty}(\mR^{d})}}\cdot\min\Big\{1,\, C_{R,d,\alpha}(1+\abs{x})^{-\frac{d-1}{2}}e^{-\alpha\abs{x}}\Big\} \label{eq:pointwise-bound}
\end{equation} 
for all $x\in\mR^{d}$. 
\end{enumerate} 
\end{theorem}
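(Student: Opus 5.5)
The plan is to obtain the coercivity estimate in (i) from a pointwise lower bound on the energy density, get existence by the direct method, and then prove (ii) by elliptic regularity plus comparison with a scalar supersolution.

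\textbf{Uniform bound on the sublevel set.} For $\bfu\in(H_{+}^{1}(\mR^{d}))^{m}$ we split $V_i=(V_i)_+-(V_i)_-$. The negative part is controlled pointwise by the crowding penalty. For $s\ge0$ we use
\begin{equation*}
s^{2}\le 2+2(s-1)_{+}^{2}\chi_{\{s>1\}},
\end{equation*}
and more precisely $s^2\le 1$ on $\{s\le1\}$ and $s^2\le 2+2(s-1)^2$ on $\{s>1\}$. Hence
\begin{equation*}
\int (V_i)_-u_i^2 \le \int_{B_R}(V_i)_- \bigl(2\chi+2(u_i-1)_+^2\bigr),
\end{equation*}
since $(V_i)_-=0$ outside $B_R$ by \eqref{eq:nonvanishing}. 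If $\varepsilon\norm{\bfV_-}_{L^\infty}\le 1/4$, then $2\norm{\bfV_-}_{L^\infty}\le\frac{1}{2\varepsilon}$, so half of the penalty absorbs this term. This yields
\begin{equation*}
\calE_{\bfV,\varepsilon,\kappa}(\bfu)\ge \sum_i\Bigl(\norm{\nabla u_i}^2_{L^2}+\alpha^2\norm{u_i}^2_{L^2(\mR^d\setminus B_R)}+\tfrac{1}{2\varepsilon}\norm{(u_i-1)_+}^2_{L^2}\Bigr)-2m\abs{B_R}\norm{\bfV_-}_{L^\infty}.
\end{equation*}
The $L^2(B_R)$ norm is then bounded by $\int_{B_R}u_i^2\le 2\abs{B_R}+2\norm{(u_i-1)_+}^2_{L^2}$, where the last term is at most $4\varepsilon C\le C$. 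On the sublevel set $\{\calE\le0\}$ this gives a bound in $H^1$ independent of $\varepsilon<\bar\varepsilon$ and of $\kappa$.

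\textbf{Existence.} We apply the direct method. Take a minimizing sequence inside the sublevel set, which is possible because $\calE(\mathbf 0)=0$, and extract a weakly convergent subsequence in $H^1$ that converges a.e. and in $L^2_{\rm loc}$. The terms behave as follows:
\begin{enumerate}
\item the Dirichlet term and $\int (V_i)_+u_i^2$ are weakly lower semicontinuous;
\item the penalty term and the quartic term are handled by Fatou's lemma;
\item $\int (V_i)_-u_i^2$ lives on $B_R$ and converges by Rellich compactness.
\end{enumerate}
Nonnegativity of the components passes to the limit.

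\textbf{Euler--Lagrange equation and regularity.} A minimizer has finite quartic term. Test with $u_i+t\phi$ for $\phi\in C_c^\infty$ and $\phi\ge0$ wherever needed. Alternatively, observe that replacing $u_i$ by $\abs{u_i}$ does not change the energy, so the minimizer is also a minimizer over $(H^1)^m$ and admits two-sided variations. Here differentiability of the quartic term requires $u_j\in L^\infty_{\rm loc}$. To get this, I would first obtain an $L^\infty$ bound by truncation: replacing $u_i$ by $\min\{u_i,M\}$ with $M=(1-\varepsilon\norm{\bfV_-}_{L^\infty})^{-1}$ does not increase the energy. On $\{u_i>M\}$ one has $\frac{1}{\varepsilon}(s-1)_+^2-(V_i)_-s^2$ increasing in $s$ for $s\ge M$, while the gradient and quartic terms decrease. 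Then the weak form of \eqref{eq:EL} has right-hand side in $L^\infty_{\rm loc}$, and Calder\'on--Zygmund gives $W^{2,p}_{\rm loc}$ and, by Sobolev embedding, $C^1_{\rm loc}$.

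\textbf{Pointwise bound \eqref{eq:pointwise-bound}.} The bound by $M$ comes from the truncation above. For the decay, each component satisfies $\Delta u_i\ge V_iu_i\ge\alpha^2u_i$ outside $B_R$, because the penalty and competition terms are nonnegative. Thus $u_i$ is a subsolution of $\Delta w=\alpha^2 w$ in $\mR^d\setminus B_R$, with $u_i\le M$ on $\partial B_R$ and $u_i\to0$ at infinity in the $H^1$ sense. We compare with the radial supersolution $M\,\Phi(\abs x)/\Phi(R)$, where $\Phi$ is the decaying modified Bessel-type solution satisfying $\Phi(r)\sim r^{-(d-1)/2}e^{-\alpha r}$. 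The comparison is done by the weak maximum principle, testing with $(u_i-w)_+$, which lies in $H^1$ because $w$ decays. This gives the constant $C_{R,d,\alpha}$.

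The main obstacle I expect is the rigorous justification of the Euler--Lagrange equation, namely the integrability and differentiability of the quartic term for minimizers that are only in $H^1$. This is resolved by proving the $L^\infty$ truncation bound first.
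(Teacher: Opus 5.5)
Your proposal is correct and follows the paper's proof essentially step by step: the same coercivity estimate from $s^{2}\le 2+2(s-1)_{+}^{2}$, the truncation at $M=L_{\varepsilon}$, two-sided variations via the even extension, Calder\'on--Zygmund regularity, and comparison with $L_{\varepsilon}g_{\alpha}(\abs{x})/g_{\alpha}(R)$ outside $B_{R}$. Your direct-method existence argument is exactly the alternative the paper records in \Cref{rem:LSC}, in place of its constructive Step~3. One point should be made explicit: since (ii) concerns \emph{every} minimizer, you need the strict version of your truncation step. The zeroth-order integrand $V_{i}s^{2}+\varepsilon^{-1}(s-1)_{+}^{2}$ is strictly increasing on $[M,\infty)$, so truncation strictly lowers the energy whenever $\{u_{i}>M\}$ has positive measure. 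The statement that truncation ``does not increase the energy'' only shows that \emph{some} minimizer is bounded by $M$.
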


\begin{remark}\label{rem:even-extension}
Compared with \Cref{thm:1}, at first glance, the constraint $u_{i}\ge0$ for all $i=1,\cdots,m$ in $(H_{+}^{1}(\mR^{d}))^{m}$ might appear to introduce a free boundary term into the Euler-Lagrange equation. However, a minimizer over $(H_{+}^{1}(\mR^{d}))^{m}$ is also a minimizer of the even extension 
\begin{equation*}
\tilde{\calE}_{\bfV,\varepsilon,\kappa}(\bfu) := \calE_{\bfV,\varepsilon,\kappa}(\abs{u_{1}},\cdots,\abs{u_{m}}) 
\end{equation*}
over $(H^{1}(\mR^{d}))^{m}$. Moreover, since the mapping $s\mapsto(\abs{s}-1)_{+}^{2}$ is $C^{1,1}$, no free boundary term appears in the Euler-Lagrange equation \eqref{eq:EL}. 
\end{remark}

Define 
\begin{equation*} 
\calE_{\bfV,0,+\infty}(\bfu) :=
\left\{\begin{aligned}
& \mF_{\bfV}(\bfu), && \mathbf u\in\mS_m, \\ 
& +\infty, && \bfu\notin\mS_{m}. 
\end{aligned}\right. 
\end{equation*} 
We now show that the minimization problem \eqref{eq:main-minimizing} arises as the variational limit of energies with increasingly strong crowding and interspecific competition penalties, in the following sense:

\begin{definition}\label{def:Gamma-convergence}
For any pair of sequences $\varepsilon_n\to 0^+$ and $\kappa_n\to+\infty$, we say that \emph{$\calE_{\bfV,\varepsilon_n,\kappa_n}$ $\Gamma$-converges to $\calE_{\bfV,0,+\infty}$}, denoted by $\calE_{\bfV,\varepsilon_{n},\kappa_{n}}\overset{\Gamma}{\rightarrow}\calE_{\bfV,0,+\infty}$, with respect to the weak topology of $(H_{+}^1(\mR^d))^m$, if the following two properties hold: 
\begin{enumerate} 
\renewcommand{\labelenumi}{\theenumi}
\renewcommand{\theenumi}{\rm (\alph{enumi})} 
\item \label{eq:Gamma-1}  Given any sequence $\left\{\bfw^{(n)}\right\}_{n\in\mN}\subset (H_{+}^1(\mR^d))^m$ such that $\bfw^{(n)}\rightarrow\bfw$ weakly in $(H^{1}(\mR^{d}))^{m}$, one has 
\begin{equation*}
\calE_{\bfV,0,+\infty}(\bfw) \le \liminf_{n\rightarrow\infty}\calE_{\bfV,\varepsilon_{n},\kappa_{n}}(\bfw^{(n)}). 
\end{equation*}
\item \label{eq:Gamma-2}  Given any $\bfv\in(H_{+}^1(\mR^d))^m$, there is a sequence $\left\{\bfv^{(n)}\right\}_{n\in\mN}\subset (H_{+}^1(\mR^d))^m$ such that 
$\bfv^{(n)}\to\bfv$ weakly in $(H^1(\mR^d))^m$ and 
\begin{equation*}
\calE_{\bfV,0,+\infty}(\bfv) \ge \limsup_{n\rightarrow\infty}\calE_{\bfV,\varepsilon_{n},\kappa_{n}}(\bfv^{(n)}). 
\end{equation*}
\end{enumerate} 
\end{definition}


\begin{remark}\label{rem:property-b}
Since $\calE_{\bfV,0,+\infty}(\bfv)=+\infty$ for all $\bfv\notin\mS_{m}$, property \ref{eq:Gamma-2} follows by taking the trivial sequence $\bfv^{(n)}=\bfv$ for all $n\in\mN$, since the penalties disappear on $\mS_{m}$.  
\end{remark}

\begin{remark}\label{rem:property-a-hint}
If property \ref{eq:Gamma-1} holds, then, whenever $\bfw^{(n)}\rightarrow\bfw\in (H_{+}^1(\mR^d))^m$ weakly in $(H^{1}(\mR^{d}))^{m}$ and $\liminf_{n\rightarrow\infty}\calE_{\bfV,\varepsilon_{n},\kappa_{n}}(\bfw^{(n)})<+\infty$, we necessarily have $\bfw\in\mS_{m}$. This observation suggests that we should establish, in a suitable sense (cf. \eqref{eq:hint-done}),  
\begin{equation*}
(w_{i}^{(n)}-1)_{+}\rightarrow0 ,\quad w_{i}^{(n)}w_{j}^{(n)} \rightarrow 0. 
\end{equation*}
\end{remark}

We now show, not only that the above $\Gamma$-convergence holds, but also that the resulting variational limit enjoys a strong additional property:

\begin{theorem}\label{thm:singular-limit}
Assume that all the hypotheses of \Cref{thm:well-posed} hold. Given any sequences $\varepsilon_n\to 0^+$ and $\kappa_n\to+\infty$, the following holds: 
\begin{enumerate} 
\renewcommand{\labelenumi}{\theenumi}
\renewcommand{\theenumi}{\rm (\roman{enumi})} 
\item \label{itm:singular-limit-c} 
$\calE_{\bfV,\varepsilon_n,\kappa_n}$ $\Gamma$-converges to $\calE_{\bfV,0,+\infty}$ with respect to the weak topology of $(H_{+}^1(\mR^d))^m$ in the sense of \Cref{def:Gamma-convergence}. 
\item \label{itm:singular-limit-d} Let $\left\{\bfu^{(n)}\right\}_{n\in\mN}$ be any sequence of global minimizers of $\calE_{\bfV,\varepsilon_n,\kappa_n}$ over $(H_{+}^{1}(\mR^{d}))^{m}$. Then there exist a subsequence, still denoted by $\left\{\bfu^{(n)}\right\}_{n\in\mN}$, and a global minimizer $\bfu_*$ of $\mF_{\bfV}$ over $\mS_m$ such that
\begin{equation} 
\min_{(H_{+}^{1}(\mR^{d}))^{m}}\calE_{\bfV,\varepsilon_n,\kappa_n}
\rightarrow \min_{\mS_m}\mF_{\bfV},\quad \text{as $n\rightarrow\infty$.}  \label{eq:minimizing-sequence}
\end{equation} 
and $\bfu^{(n)}\to \bfu_{*}$ strongly in $(H^{1}(\mR^d))^{m}$. 
\end{enumerate}
\end{theorem}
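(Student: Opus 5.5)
For \ref{itm:singular-limit-c}, property \ref{eq:Gamma-2} is already settled by \Cref{rem:property-b}, so the work is the liminf inequality \ref{eq:Gamma-1}. Let $\bfw^{(n)}\rightharpoonup\bfw$ weakly in $(H^{1}(\mR^{d}))^{m}$, and assume without loss of generality that $L:=\liminf_n\calE_{\bfV,\varepsilon_n,\kappa_n}(\bfw^{(n)})<+\infty$. Pass to a subsequence realizing the liminf; weak convergence gives a uniform $H^1$ bound $M$.

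The first step is to show $\bfw\in\mS_m$. Since $\mF_{\bfV}(\bfw^{(n)})\ge -\norm{\bfV}_{L^\infty}M^2$, both penalties are bounded, so
\[
\int(w^{(n)}_i-1)_+^2\le C\varepsilon_n\to0,\qquad \int (w^{(n)}_iw^{(n)}_j)^2\le C/\kappa_n\to0 .
\]
Rellich gives $\bfw^{(n)}\to\bfw$ strongly in $L^2_{\rm loc}$ and, along a further subsequence, a.e. By Fatou, $\int(w_i-1)_+^2=0$ and $\int w_i^2w_j^2=0$. Since nonnegativity is preserved under a.e. limits, $\bfw\in\mS_m$.

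The second step is lower semicontinuity of $\mF_{\bfV}$, which is the main obstacle because $V_i$ may be negative and $\mathbb R^d$ is unbounded. Write $V_i=(V_i)_+-(V_i)_-$. The gradient term and $\int (V_i)_+ w_i^2$ are weakly lsc: the first by convexity, the second by Fatou along the a.e. subsequence. For the negative part, \eqref{eq:nonvanishing} forces $(V_i)_-=0$ outside $B_R$. Hence $\int (V_i)_-\abs{w_i^{(n)}}^2$ is an integral over $B_R$ and converges by strong $L^2(B_R)$ convergence. Since the penalties are nonnegative, $\mF_{\bfV}(\bfw)\le\liminf\mF_{\bfV}(\bfw^{(n)})\le L$. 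This proves \ref{itm:singular-limit-c}.

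For \ref{itm:singular-limit-d}, the minimizers satisfy $\calE_{\bfV,\varepsilon_n,\kappa_n}(\bfu^{(n)})\le\calE_{\bfV,\varepsilon_n,\kappa_n}(\mathbf0)=0$. By \Cref{thm:well-posed}\ref{itm:singular-limit-a} they are uniformly bounded in $H^1$ for large $n$, so a subsequence converges weakly to some $\bfu_*$. By \ref{itm:singular-limit-c}, $\bfu_*\in\mS_m$ and
\[
\mF_{\bfV}(\bfu_*)\le\liminf\min\calE_n .
\]
On the other hand, every $\bfv\in\mS_m$ satisfies $\min\calE_n\le\calE_n(\bfv)=\mF_{\bfV}(\bfv)$, so $\limsup\min\calE_n\le\inf_{\mS_m}\mF_{\bfV}$. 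Together these show that $\bfu_*$ is a minimizer and that \eqref{eq:minimizing-sequence} holds; this also gives existence over $\mS_m$ without \Cref{assu:existence}.

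It remains to upgrade to strong convergence. The chain of inequalities forces each piece to converge. Since $\sum_i\int (V_i)_-\abs{u^{(n)}_i}^2$ converges, and the three remaining nonnegative terms are each lower semicontinuous with sum converging to the limit sum, each term converges separately. These terms are the gradient energy, $\int (V_i)_+\abs{u^{(n)}_i}^2$, and the penalties; in particular $\sum_i\norm{\nabla u^{(n)}_i}_{L^2}^2\to\sum_i\norm{\nabla u_{*,i}}_{L^2}^2$. For the $L^2$ norm I would use the pointwise bound \eqref{eq:pointwise-bound}. It gives a uniform exponentially decaying, square-integrable majorant, so dominated convergence yields $u^{(n)}_i\to u_{*,i}$ in $L^2(\mR^d)$. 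Alternatively, $\int_{\mR^d\setminus B_\rho}\abs{u_i^{(n)}}^2\le\alpha^{-2}\int(V_i)_+\abs{u_i^{(n)}}^2$ provides tightness. Combining convergence of norms with weak convergence gives strong convergence in $(H^1(\mR^d))^m$.
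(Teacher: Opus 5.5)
Your proposal is correct and follows essentially the same route as the paper. Bounded energies force both penalties to zero, and a.e.\ limits with Fatou put the weak limit in $\mS_m$. Lower semicontinuity of $\mF_{\bfV}$ comes from splitting $V_i=(V_i)_+-(V_i)_-$, with $(V_i)_-$ supported in $\overline{B_R}$. In (ii), the sandwich $\mF_{\bfV}(\bfu_*)\le\liminf_n\min\calE_{\bfV,\varepsilon_n,\kappa_n}\le\limsup_n\min\calE_{\bfV,\varepsilon_n,\kappa_n}\le\inf_{\mS_m}\mF_{\bfV}$, together with the pointwise bound \eqref{eq:pointwise-bound}, gives convergence of norms and hence strong $H^1$ convergence. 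The differences from the paper are cosmetic: you control the penalties through the $H^1$ bound of a weakly convergent sequence instead of \eqref{coercivity1}, and you get $L^2(\mR^d)$ convergence by dominated convergence after the energy argument rather than by compactness before it. In (ii), just make sure to pass again to an a.e.\ convergent subsequence before applying Fatou to the $(V_i)_+$ term and dominated convergence.
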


\begin{remark*} 
By the proof of \Cref{thm:singular-limit}\ref{itm:singular-limit-a}, for each fixed $\kappa>0$ and every sequence $\varepsilon_{n}\to0^{+}$, one can show that $\calE_{\bfV,\varepsilon_{n},\kappa}\overset{\Gamma}{\rightarrow}\calE_{\bfV,0,\kappa}$ with respect to the weak topology of $(H_{+}^{1}(\mR^{d}))^{m}$, where 
\begin{equation*} 
\calE_{\bfV,0,\kappa}(\bfu)
=\left\{\begin{aligned}
& \mF_{\bfV}(\bfu)
+ \kappa \sum_{i<j}\int_{\mR^d}u_{i}^{2}u_{j}^{2}\,\rmd\sfm, && \bfu\in\mathcal{A}, \\ 
& +\infty, && \bfu\notin\mathcal{A}. 
\end{aligned}\right. 
\end{equation*} 
and $\mathcal{A}:= \left\{\bfu\in(H_{+}^{1}(\mR^{d}))^{m} : u_i\leq1\ \text{for all $i$} \right\}$. 
The only modification to the proof is the application of Fatou's lemma to the penalty term $\kappa \sum_{i<j}\int_{\mR^d}u_{i}^{2}u_{j}^{2}\,\rmd\sfm$. 
Similarly, for each fixed $\varepsilon>0$ and every sequence $\kappa_{n}\to+\infty$, one also can show that $\calE_{\bfV,\varepsilon,\kappa_{n}}\overset{\Gamma}{\rightarrow}\calE_{\bfV,\varepsilon,+\infty}$ with respect to the weak topology of $(H_{+}^{1}(\mR^{d}))^{m}$, where 
\begin{equation*} 
\mathcal E_{\bfV,\varepsilon,+\infty}(\mathbf u)
=
\left\{\begin{aligned}
& \mF_{\bfV}(\bfu)
+\frac{1}{\varepsilon}\sum_{i}\int_{\mR^d}(u_{i}-1)_{+}^{2}\,\rmd\sfm, && \bfu\in\mathcal{G}, \\ 
& +\infty, && \bfu\notin\mathcal{G},  
\end{aligned}\right. 
\end{equation*} 
and $\mathcal{G}:= \left\{ \bfu\in(H_{+}^{1}(\mR^{d}))^{m} : u_{i}u_{j}=0\ \text{for $i\ne j$} \right\}$. 
Applying the same argument once more to these intermediate functionals yields $\calE_{\bfV,0,\kappa_{n}}
\overset{\Gamma}{\rightarrow}\calE_{\bfV,0,+\infty}$ and $\calE_{\bfV,\varepsilon_{n},+\infty}
\overset{\Gamma}{\rightarrow}\calE_{\bfV,0,+\infty}$ with respect to the weak topology of $(H_{+}^{1}(\mR^{d}))^{m}$. 
Thus, both successive limiting procedures lead to the constrained problem \eqref{eq:main-minimizing}. In contrast, \Cref{thm:singular-limit}\ref{itm:singular-limit-b} shows that the same limiting problem is obtained when $\varepsilon\to0^{+}$ and $\kappa\to+\infty$ simultaneously, regardless of their relative rates. 
\end{remark*}

Finally, we state the following corollary, which is an immediate consequence of \Cref{propr:m-coexist,thm:singular-limit}\ref{itm:singular-limit-d}, with $V_{j}=\breve{V}_{j}(\cdot- x^{(j)})$, $\alpha=\min_{j}\alpha_{j}$ and $R=\max_{j}(\abs{x^{(j)}}+R_{j})$, and demonstrates the stability of the coexistence result for the limiting segregated problem under sufficiently strong but finite crowding and interspecific competition.

\begin{corollary}\label{cor:coexistence}
Suppose that all assumptions in \Cref{propr:m-coexist} hold. Then there exist constants $\varepsilon_0,\kappa_0,\delta>0$
such that every global minimizer $\bfu_{(\varepsilon,\kappa)}$
of $\calE_{\bfV,\varepsilon,\kappa}$ over $(H_{+}^{1}(\mR^{d}))^{m}$ satisfies
\begin{equation*} 
\min_{1\leq i\leq m}\norm{u_{(\varepsilon,\kappa),i}}_{L^2(\mR^d)}
\geq \delta
\quad\text{for all $0<\varepsilon<\varepsilon_0$ and $\kappa>\kappa_0$}.
\end{equation*} 
Here, $\delta$ is independent of $\varepsilon$ and $\kappa$. 
\end{corollary}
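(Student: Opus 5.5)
We prove \Cref{cor:coexistence} by contradiction, combining \Cref{thm:singular-limit}\ref{itm:singular-limit-d} with \Cref{propr:m-coexist}. First fix the data. Set $V_{j}=\breve{V}_{j}(\cdot-x^{(j)})$ with the centers $x^{(j)}$ chosen so that \eqref{eq:mutually-far-away} holds. Then $\bfV\in(L^{\infty}(\mR^{d}))^{m}$ satisfies \eqref{eq:nonvanishing} with $\alpha=\min_{j}\alpha_{j}$ and $R=\max_{j}(\abs{x^{(j)}}+R_{j})$. Indeed, for $\abs{x}>R$ we have $\abs{x-x^{(j)}}>R_{j}$, so $V_{j}(x)\ge\alpha_{j}^{2}\ge\alpha^{2}$. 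Hence all hypotheses of \Cref{thm:well-posed}, and therefore of \Cref{thm:singular-limit}, hold. Take $\varepsilon_{0}\le(4\norm{\bfV_{-}}_{L^{\infty}(\mR^{d})})^{-1}$, so that global minimizers of $\calE_{\bfV,\varepsilon,\kappa}$ exist for $\varepsilon<\varepsilon_{0}$.

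Suppose the conclusion fails. Then for every $n$ (taking $\varepsilon_{0}=\delta=1/n$ and $\kappa_{0}=n$) there are $\varepsilon_{n}\in(0,1/n)$, $\kappa_{n}>n$, a global minimizer $\bfu^{(n)}$ of $\calE_{\bfV,\varepsilon_{n},\kappa_{n}}$ over $(H_{+}^{1}(\mR^{d}))^{m}$, and an index $i_{n}$ with $\norm{u^{(n)}_{i_{n}}}_{L^{2}(\mR^{d})}<1/n$. Since there are only $m$ indices, we pass to a subsequence along which $i_{n}\equiv i$ is constant. We have $\varepsilon_{n}\to0^{+}$ and $\kappa_{n}\to+\infty$. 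By \Cref{thm:singular-limit}\ref{itm:singular-limit-d}, a further subsequence converges strongly in $(H^{1}(\mR^{d}))^{m}$, and in particular in $(L^{2}(\mR^{d}))^{m}$, to a global minimizer $\bfu_{*}$ of $\mF_{\bfV}$ over $\mS_{m}$. Therefore $\norm{u_{*,i}}_{L^{2}(\mR^{d})}=\lim_{n}\norm{u^{(n)}_{i}}_{L^{2}(\mR^{d})}=0$, i.e.\ $u_{*,i}\equiv0$. This contradicts \Cref{propr:m-coexist}, which guarantees that every minimizer in $\mS_{m}$ has all components nonzero.

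The only delicate point is the quantifier structure. The negation must produce a single sequence along which $\varepsilon\to0$ and $\kappa\to\infty$ simultaneously, which is exactly the regime covered by \Cref{thm:singular-limit}\ref{itm:singular-limit-d} regardless of relative rates. The strong $H^{1}$ convergence there, rather than only weak convergence, is what allows the $L^{2}$ norm to pass to the limit. Finally, $\delta$, $\varepsilon_{0}$ and $\kappa_{0}$ are produced by the contradiction argument and depend only on the fixed data, not on $(\varepsilon,\kappa)$, as claimed.
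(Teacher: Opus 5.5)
Your argument is correct and is exactly what the paper has in mind. The paper states the corollary as an immediate consequence of \Cref{propr:m-coexist} and \Cref{thm:singular-limit}\ref{itm:singular-limit-d}, with the same choices $\alpha=\min_j\alpha_j$ and $R=\max_j(\abs{x^{(j)}}+R_j)$, and your contradiction argument just makes the quantifiers explicit. One small overstatement: strong convergence is not needed at the last step, since weak convergence already gives $\norm{u_{*,i}}_{L^{2}(\mR^{d})}\le\liminf_{n}\norm{u^{(n)}_{i}}_{L^{2}(\mR^{d})}=0$ by weak lower semicontinuity of the norm.
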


\subsection{An alternative formulation for the case \texorpdfstring{$m=2$}{m=2}}

We observe that the trivial solution $\bfu_{*}\equiv 0$ satisfies the inequality \eqref{eq:Euler-Lagrange} in \Cref{thm:1}. Therefore, the gradient descent method in \cite{AB16population,Bozorgnia2009Segregated,BA13segregated,BAKV2016PartialSegregated} generally does not work in our setting.  
When $m\ge 3$, the minimization problem \eqref{eq:main-minimizing} does not possess a global Euler-Lagrange equation (see also \cite{KSS26Multiphase}) that can be naturally implemented numerically. We also point out that the free boundary $\partial D_i \cap \partial D_j \cap \partial D_k$ for any distinct triple $(i,j,k)$ can be quite complicated.  
Indeed, even in the planar case ($d=2$), there exist configurations of three (in fact, countably many) pairwise disjoint domains sharing the same boundary (so-called Wada lakes) in which case such a definition would fail to provide any effective boundary condition. 

As discussed above, it is natural to ask whether there exists an alternative formulation that does not explicitly depend on the free boundaries $\partial D_{i}\cap\partial D_{j}$. This can be achieved in particular in the case $m = 2$. In this case, \eqref{eq:2-phase-EL} reads 
\begin{equation}
-(V_{2})_{-}\chi_{K_{2}} \le \Delta(u_{*,1}-u_{*,2}) - V_{1}u_{*,1} + V_{2}u_{*,2} \le (V_{1})_{-}\chi_{K_{1}} \quad \text{a.e. in $\mR^{d}$.} \label{eq:eq:2-phase-EL-m2}
\end{equation}
By writing $w_{*}:=u_{*,1}-u_{*,2} \in H^{1}(\mR^{d})$, since $u_{*,1}(x)u_{*,2}(x)=0$ for all $x\in\mR^{d}$, we see that 
\begin{equation}
(w_{*})_{+}=u_{*,1} ,\quad (w_{*})_{-}=u_{*,2}, \label{eq:functions-vu}
\end{equation}
and \eqref{eq:eq:2-phase-EL-m2} can be expressed in terms of single variable: 
\begin{subequations}\label{eq:2D-EL}
\begin{equation}
-(V_{2})_{-}\chi_{\{w_{*}=-1\}} \le \Delta w_{*} - V_{1}(w_{*})_{+} + V_{2}(w_{*})_{-} \le (V_{1})_{-}\chi_{\{w_{*}=1\}} \quad \text{a.e. in $\mR^{d}$.} \label{eq:2D-EL-a}
\end{equation}
In particular, if we consider the 2-sided non-contact set $\Omega_{*}:=\{-1<w_{*}<1\}$, then we reach the nonlinear equation 
\begin{equation}
\Delta w_{*} - V_{1}(w_{*})_{+} + V_{2}(w_{*})_{-} = 0 \text{ a.e. in $\Omega_{*}$} ,\quad \nabla w_{*}=0 \text{ on $\partial\Omega_{*}$} \label{eq:2D-EL-b}
\end{equation}
\end{subequations} 
since $w_{*}\in C_{\rm loc}^{1}$ near the free boundary $\partial\Omega_{*}$. 
This suggests that we consider the following minimization problem: 
\begin{equation}
\begin{aligned} 
& \text{minimizing} && \mJ_{\bfV}(w) := \int_{\mR^{d}} \left(\abs{\nabla w}^{2} + V_{1}(x)\abs{w_{+}}^{2} + V_{2}(x)\abs{w_{-}}^{2} \right)\,\rmd\sfm \\ 
& \text{subject to} && \mK_{[-1,1]} := \left\{ w\in H^{1}(\mR^{d}) : -1\le w\le 1 \right\}. 
\end{aligned} \label{eq:main-minimizing-2-phase} 
\end{equation}
By adapting the strategy of \cite[Theorem~2]{AS16MultiPhaseQD}, one can readily verify the equivalence between the minimization problems \eqref{eq:main-minimizing} and \eqref{eq:main-minimizing-2-phase}, as stated in the following lemma (the proof is omitted). 

\begin{lemma}
Let $V_{1},V_{2}\in L^{\infty}(\mR^{d})$. If $(u_{*,1},u_{*,2})$ is a minimizer of \eqref{eq:main-minimizing} (with $m=2$) in $\mS_{2}$, then $w_{*}:=u_{*,1}-u_{*,2}$ is a minimizer of \eqref{eq:main-minimizing-2-phase} in $\mK_{[-1,1]}$. Conversely, if $w_{*}$ is a minimizer of \eqref{eq:main-minimizing-2-phase} in $\mK_{[-1,1]}$, then $((w_{*})_{+},(w_{*})_{-})$ is a minimizer of \eqref{eq:main-minimizing} (with $m=2$) in $\mS_{2}$. 
\end{lemma}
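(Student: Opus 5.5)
The plan is to show that the map $\Phi:\mS_{2}\to\mK_{[-1,1]}$, $\Phi(u_{1},u_{2}):=u_{1}-u_{2}$, and the map $\Psi:\mK_{[-1,1]}\to\mS_{2}$, $\Psi(w):=(w_{+},w_{-})$, are mutually inverse bijections that preserve energy:
\begin{equation*}
\mJ_{\bfV}(\Phi(\bfu))=\mF_{\bfV}(\bfu), \qquad \mF_{\bfV}(\Psi(w))=\mJ_{\bfV}(w).
\end{equation*}
Once this is established, both directions of the lemma follow at once. For the first direction, if $\bfu_{*}$ minimizes $\mF_{\bfV}$ over $\mS_{2}$, then for any $w\in\mK_{[-1,1]}$ we get
\begin{equation*}
\mJ_{\bfV}(w)=\mF_{\bfV}(\Psi(w))\ge\mF_{\bfV}(\bfu_{*})=\mJ_{\bfV}(\Phi(\bfu_{*})).
\end{equation*}
The converse direction is the symmetric argument.

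\textbf{Step 1: $\Psi$ is well defined and preserves energy.} For $w\in\mK_{[-1,1]}$, the standard Stampacchia truncation results give $w_{\pm}\in H^{1}(\mR^{d})$, with
\begin{equation*}
\nabla w_{+}=\chi_{\{w>0\}}\nabla w, \qquad \nabla w_{-}=-\chi_{\{w<0\}}\nabla w \quad \text{a.e.}
\end{equation*}
We also have $0\le w_{\pm}\le 1$ and $w_{+}w_{-}=0$ pointwise, so $\Psi(w)\in\mS_{2}$. Since $\nabla w=0$ a.e. on $\{w=0\}$, we obtain $\abs{\nabla w}^{2}=\abs{\nabla w_{+}}^{2}+\abs{\nabla w_{-}}^{2}$ a.e. Adding the potential terms then gives $\mJ_{\bfV}(w)=\mF_{V_{1}}(w_{+})+\mF_{V_{2}}(w_{-})$.

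\textbf{Step 2: $\Phi$ is well defined and inverts $\Psi$.} Take $\bfu\in\mS_{2}$ and set $w=u_{1}-u_{2}\in H^{1}(\mR^{d})$. The segregation condition $u_{1}u_{2}=0$ a.e. forces $w_{+}=u_{1}$ and $w_{-}=u_{2}$ a.e., exactly as in \eqref{eq:functions-vu}. Together with $0\le u_{i}\le 1$, this gives $-1\le w\le 1$. Hence $\Phi(\bfu)\in\mK_{[-1,1]}$ and $\Psi\circ\Phi=\mathrm{id}$. The identity $\Phi\circ\Psi=\mathrm{id}$ is immediate from $w=w_{+}-w_{-}$. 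Step 1 then yields $\mJ_{\bfV}(\Phi(\bfu))=\mF_{\bfV}(\Psi(\Phi(\bfu)))=\mF_{\bfV}(\bfu)$.

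No real obstacle is expected. The only point needing care is the gradient splitting in Step 1, namely that $\nabla w=0$ a.e. on the level set $\{w=0\}$. This follows from \cite[Lemma~7.7]{GT01Elliptic}, the same tool already used in the proof sketch of \Cref{thm:1}. Apart from that, one should interpret the segregation condition in $\mS_{2}$ almost everywhere, which is harmless.
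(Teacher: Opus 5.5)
Your proof is correct and complete. The maps $\bfu\mapsto u_{1}-u_{2}$ and $w\mapsto(w_{+},w_{-})$ are mutually inverse, energy-preserving bijections between $\mS_{2}$ and $\mK_{[-1,1]}$; the only nontrivial point is the a.e.\ gradient splitting $\abs{\nabla w}^{2}=\abs{\nabla w_{+}}^{2}+\abs{\nabla w_{-}}^{2}$, and both implications then follow at once. The paper omits its proof, but the route it indicates, adapting \cite[Theorem~2]{AS16MultiPhaseQD} through the identification \eqref{eq:functions-vu}, is essentially this argument.
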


The advantage of the formulation in \eqref{eq:main-minimizing-2-phase} is that it avoids explicit dependence on the free boundary $\partial D_1 \cap \partial D_2$, whose geometry may be highly intricate, as discussed in \Cref{sec:regularity-FB}. In particular, our analysis does not rule out the possibility of pathological configurations such as space-filling curves. 
Instead, this formulation involves only the free boundaries $\partial\{w_{*}=1\}$ and $\partial\{w_{*}=-1\}$, each corresponding to a one-phase problem, which is significantly more tractable from the perspective of free boundary methods (cf. \Cref{cor:FB}). In addition, by applying the unique continuation property from sets of positive measure (see, e.g., \cite{Regbaoui2001MUCP} for $d\ge 3$ and \cite{GL93MUCP} for $d=2$) to \eqref{eq:2D-EL-b}, we obtain the following corollary, which roughly states that the two species together occupy almost the entire domain: 

\begin{corollary}\label{cor:positivity}
Let $V_{1},V_{2}\in L^{\infty}(\mR^{d})$. If $w_{*}$ is a minimizer of \eqref{eq:main-minimizing-2-phase} in $\mK_{[-1,1]}$, then either $w_{*}\equiv 0$ or $\sfm(\{w_{*}=0\})=0$. 
\end{corollary}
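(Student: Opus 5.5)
The strategy is to work with the scalar Euler--Lagrange relations \eqref{eq:2D-EL} for $w_{*}$, obtained via the equivalence lemma: $((w_{*})_{+},(w_{*})_{-})$ is a minimizer in $\mS_{2}$. We then combine them with the measure-theoretic unique continuation property. By \Cref{thm:1} and \eqref{eq:Lipschitz-regularity}, $w_{*}\in W_{\rm loc}^{2,p}(\mR^{d})$ for all $p<\infty$, and \eqref{eq:2D-EL-a} holds a.e. On the open set $\Omega_{*}=\{-1<w_{*}<1\}$ (open since $w_{*}$ is continuous), the characteristic functions vanish, so $\Delta w_{*}=W w_{*}$ a.e. in $\Omega_{*}$. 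Here
\[
W:=V_{1}\chi_{\{w_{*}>0\}}+V_{2}\chi_{\{w_{*}\le 0\}}\in L^{\infty}(\mR^{d}),
\]
because $V_{1}(w_{*})_{+}-V_{2}(w_{*})_{-}=Ww_{*}$. Thus $w_{*}$ is a $W^{2,p}_{\rm loc}$ strong solution of a Schr\"odinger equation with bounded potential, and $\abs{\Delta w_{*}}\le \norm{W}_{L^\infty}\abs{w_{*}}$ in $\Omega_{*}$.

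Suppose $\sfm(\{w_{*}=0\})>0$. Since $\{w_{*}=0\}\subset\Omega_{*}$, some connected component $\Omega'$ of $\Omega_{*}$ meets $\{w_{*}=0\}$ in a set of positive measure. The unique continuation property from sets of positive measure (\cite{Regbaoui2001MUCP} for $d\ge3$, \cite{GL93MUCP} for $d=2$) then applies to the differential inequality $\abs{\Delta w_{*}}\le C\abs{w_{*}}$ in the connected open set $\Omega'$, and gives $w_{*}\equiv0$ in $\Omega'$. Note that $w_{*}$ vanishes to infinite order at density points of its zero set, which is exactly what those results require.

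The remaining step is to propagate this to all of $\mR^{d}$. We use that $\partial\Omega'\subset\{\abs{w_{*}}=1\}$, because $\Omega'$ is a component of the open set $\Omega_{*}$. If $\partial\Omega'\ne\emptyset$, take $x\in\partial\Omega'$. By continuity, $w_{*}(x)=\lim w_{*}(y)=0$ for $y\in\Omega'$ tending to $x$, which contradicts $\abs{w_{*}(x)}=1$. Hence $\partial\Omega'=\emptyset$. Since $\Omega'$ is nonempty, open and closed, connectedness of $\mR^{d}$ gives $\Omega'=\mR^{d}$ and therefore $w_{*}\equiv0$. So either $w_{*}\equiv0$ or $\sfm(\{w_{*}=0\})=0$.

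The main obstacle is making sure the cited unique continuation results apply as used. They require strong solutions in $W^{2,p}_{\rm loc}$ (or $W^{2,2}_{\rm loc}$) of $\abs{\Delta u}\le C\abs{u}$ on a connected domain, and for $d=2$ one should check that the hypotheses of \cite{GL93MUCP} cover bounded zeroth-order potentials. The continuity and $W^{2,p}_{\rm loc}$ regularity of $w_{*}$ near $\{w_{*}=0\}$ come from \Cref{thm:1}\ref{itm:two-phase} together with \eqref{eq:2-phase-EL}, so this is the point to verify carefully. The connectedness argument above itself relies only on the continuity of $w_{*}$.
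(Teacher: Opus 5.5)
Your proposal is correct and takes the paper's route: on $\Omega_*$ the relation \eqref{eq:2D-EL-b} becomes $\Delta w_* = W w_*$ with bounded $W$, and unique continuation from sets of positive measure then forces $w_*\equiv 0$ on a component $\Omega'$ of $\Omega_*$. Your continuity argument shows that this component has empty boundary, since $\partial\Omega'\subset\{\abs{w_*}=1\}$ while $w_*=0$ on $\overline{\Omega'}$. This correctly supplies the propagation to all of $\mR^{d}$, which the paper's one-line justification leaves implicit.
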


\section{Conclusion} 

In this paper, we use methods from potential theory, particularly free boundary analysis, to study the equilibrium configurations of multiple competing species in an unbounded heterogeneous habitat. The main novelty lies in providing a rigorous analysis in the multiphase, higher-dimensional setting $m\ge2$ and $d\ge2$, where the one-dimensional arguments used in earlier studies are no longer applicable.

We further introduce a family of penalized energies that allows population densities to exceed the normalized carrying capacity and distinct species to overlap for finite parameter values. We prove that, as the crowding parameter $\varepsilon\to0^{+}$ and the competition parameter $\kappa\to+\infty$, independently of their relative rates, these energies $\Gamma$-converge, with respect to the weak topology of $(H^1(\mR^d))^m$, to the constrained segregation energy. Moreover, any corresponding sequence of global minimizers admits a subsequence that converges strongly in $(H^1(\mR^d))^m$ to a global minimizer of the constrained problem, and the corresponding minimum energy values also converge. This result provides a modeling interpretation of the two constraints defining $\mS_m$: the upper bound $u_i\le1$ arises from rapid local crowding regulation, while the segregation condition $u_{i}u_{j}=0$ results from strong interspecific competition.

Finally, we emphasize that even when $m=2$, the interface between the two species can exhibit a highly nontrivial structure. In fact, at this stage, we cannot exclude configurations reminiscent of space-filling curves, reflecting the inherently complex nature of the free boundary. 
Even in the three-species case, many questions remain open regarding the geometry of interfaces where multiple species interact. In particular, we cannot exclude configurations reminiscent of Wada lakes. We hope that further developments in free boundary techniques will shed new light on these problems. 
We also point out that numerical simulation of nonlinear PDEs, particularly those involving free boundary terms, is challenging. We leave this aspect for future research.




\appendix 
\crefalias{section}{appendix}
\section{Proof of main results}  
 
We begin with the proof of \Cref{thm:1}. 
 
\begin{proof}[Proof of \Cref{thm:1}] 
The proof is divided into several parts. 

\medskip 

\noindent \emph{First, we prove  \eqref{eq:Euler-Lagrange}}. Let $\psi\in C_{c}^{\infty}(\mR^{d})$ be non-negative. For each $\delta>0$, define $\bfz=(z_{1},\cdots,z_{m})$ by 
\begin{equation*}
\left\{\begin{aligned}
z_{j} &:= \min\left\{\max\left\{u_{*,j}-\sum_{\ell\neq j}u_{*,\ell}+\delta\psi,0\right\},1\right\}, \\ 
z_{\ell} &:= \max\left\{u_{*,\ell} - u_{*,j} - \delta\psi,0\right\}, && \text{for all $\ell\neq j$.} 
\end{aligned}\right. 
\end{equation*}
Observe that 
\begin{equation*}
\Omega_{\delta}:=\{z_{j}>0\} \subset \left\{\sum_{\ell\neq j}u_{*,\ell}-u_{*,j}-\delta\psi<0\right\}\subset\bigcap_{\ell\neq j}\{z_{\ell}=0\}, \quad \mR^{d}\setminus\Omega_{\delta} \subset \{u_{*,j}=0\}. 
\end{equation*}
Therefore, $\bfz\in\mS_{m}$ and $0 \le \mF_{\bfV}(\bfz) - \mF_{\bfV}(\bfu_{*}) = I_{1} + I_{2}$ with 
\begin{equation*}
\begin{aligned}
I_{1} &= \sum_{\ell=1}^{m}\int_{\mR^{d}} \left( \abs{\nabla z_{\ell}}^{2} - \abs{\nabla u_{*,\ell}}^{2} \right)\,\rmd\sfm, \\ 
I_{2} &= \sum_{\ell=1}^{m}\int_{\mR^{d}} V_{\ell} \left( \abs{z_{\ell}}^{2} - \abs{u_{*,\ell}}^{2} \right)\,\rmd\sfm. 
\end{aligned}
\end{equation*}
Following the computations in \cite[Lemma~3.3]{KSS26Multiphase}, we reach 
\begin{equation*}
\begin{aligned}
I_{1} &\leq  2\delta\int_{\mR^{d}} \nabla\psi\cdot\nabla u_{*,j}\,\rmd\sfm - 2\delta\int_{\mR^{d}} \nabla\psi\cdot\nabla\left(\sum_{\ell\neq j}u_{*,\ell}\right)\,\rmd\sfm \\ 
& \quad + \delta^{2}\int_{\Omega_{\delta}}\abs{\nabla\psi}^{2}\,\rmd\sfm + (m-1)\delta^{2}\int_{\mR^{d}\setminus\Omega_{\delta}} \abs{\nabla\psi}^{2}\,\rmd\sfm. 
\end{aligned}
\end{equation*}
We proceed to compute $I_{2}$. Since $u_{*,\ell}\cdot u_{*,j}=0$ for all $\ell\neq j$, then 
\begin{equation*}
z_{\ell}-u_{*,\ell} = \left\{\begin{aligned}
& -\delta\psi && \text{if $u_{*,\ell}\ge\delta\psi$} \\ 
& -u_{*,\ell} && \text{if $u_{*,\ell}<\delta\psi$} 
\end{aligned}\right. \quad \text{for all $\ell\neq j$}
\end{equation*}
and 
\begin{equation*}
z_{j}-u_{*,j}=\left\{\begin{aligned}
& 1-u_{*,j}\le\delta\psi && \text{if $u_{*,j}\ge 1-\delta\psi$,} \\
& \delta\psi && \text{if $0<u_{*,j}<1-\delta\psi$,} \\ 
& \delta\psi && \text{if $u_{*,j}=0$ and $\sum_{\ell\neq j}u_{*,\ell}=0$,} \\ 
& \delta\psi - \sum_{\ell\neq j}u_{*,\ell} && \text{if $0<\sum_{\ell\neq j}u_{*,\ell}<\delta\psi$,} \\ 
& 0 && \text{if $\sum_{\ell\neq j}u_{*,\ell}\ge\delta\psi$,}
\end{aligned}\right. 
\end{equation*}
for all sufficiently small $\delta>0$. Hence, we obtain 
\begin{equation*}
\begin{aligned}
I_{2} &= \sum_{\ell\neq j}\int_{\mR^{d}}V_{\ell}(z_{\ell}-u_{*,\ell})(z_{\ell}+u_{*,\ell})\,\rmd\sfm + \int_{\mR^{d}}V_{j}(z_{j}-u_{*,j})(z_{j}+u_{*,j})\,\rmd\sfm \\ 
&\le -2\delta\sum_{\ell\neq j}\int_{\{u_{*,\ell}\ge\delta\psi\}}V_{\ell}u_{*,\ell}\psi\,\rmd\sfm + 2\delta\int_{\{0<u_{*,j}<1-\delta\psi\}} V_{j}u_{*,j}\psi\,\rmd\sfm \\
&\quad +2\delta\int_{\{u_{*,j}\ge1-\delta\psi\}}(V_{j})_{+}u_{*,j}\psi\,\rmd\sfm \\ 
&\quad + \delta^{2}\sum_{\ell\neq j}\int_{\{u_{*,\ell}\ge\delta\psi\}}V_{\ell}\abs{\psi}^{2}\,\rmd\sfm - \overbrace{\sum_{\ell\neq j}\int_{\{u_{*,\ell}<\delta\psi\}}V_{\ell}(u_{*,\ell})^{2}\,\rmd\sfm}^{=\,O(\delta^{2})} \\ 
&\quad + \delta^{2}\int_{\{u_{*,j}>0\}} V_{j}\abs{\psi}^{2}\,\rmd\sfm + \delta^{2}\int_{\{u_{*,j}=\sum_{\ell\neq j}u_{*,\ell}=0\}}V_{j}\abs{\psi}^{2}\,\rmd\sfm \\ 
&\quad + \overbrace{\int_{\{0<\sum_{\ell\neq j}u_{*,\ell}<\delta\psi\}}V_{j}\left(\delta\psi-\sum_{\ell\neq j}u_{*,\ell}\right)^{2}\,\rmd\sfm}^{=\,O(\delta^{2})}
\end{aligned}
\end{equation*} 
Consequently, we have 
\begin{equation*}
\begin{aligned}
0 &\le \limsup_{\delta\rightarrow 0_{+}}\frac{1}{2\delta}(I_{1}+I_{2}) \\ 
&\le \int_{\mR^{d}} \nabla\left( u_{*,j} - \sum_{\ell\neq j}u_{*,\ell} \right)\cdot\nabla\psi\,\rmd\sfm \\
&\quad + \int_{\mR^{d}} \left( \overbrace{V_{j}\chi_{\{u_{*,j}<1\}}u_{*,j} + (V_{j})_{+}\chi_{\{u_{*,j}=1\}}}^{=V_{j}u_{*,j}+(V_{j})_{-}\chi_{\{u_{*,j}=1\}}} -\sum_{\ell\neq j}V_{\ell}u_{*,\ell}\right)\psi\,\rmd\sfm 
\end{aligned}
\end{equation*}
which concludes \eqref{eq:Euler-Lagrange}. 

From \eqref{eq:Euler-Lagrange}, we obtain 
\begin{equation*}
\Delta\left( u_{*,j} - \sum_{\ell\neq j}u_{*,\ell} \right) \le C_{0}:= \norm{(V_{j})_{+}}_{L^{\infty}(\mR^{d})} + \sum_{\ell\neq j} \norm{(V_{\ell})_{-}}_{L^{\infty}(\mR^{d})} \quad \text{in $\mD'(\mR^{d})$}
\end{equation*}
Fix $R>0$ and consider the Newtonian potential $U^{C_{0}\chi_{B_{R}}}$ (cf. \cite[(1.2)]{KSS26Multiphase}). Then, 
\begin{equation*}
\sum_{\ell\neq j}u_{*,\ell} - u_{*,j} - U^{C_{0}\chi_{B_{R}}} \quad \text{is subharmonic in $B_{R}$.}
\end{equation*}
Arguing as in \cite[Corollary~3.4]{KSS26Multiphase}, we deduce that $\bfu_{*}$ admits continuous representative on $B_{R}$ for each $R>0$. Since $R>0$ is arbitrary, the conclusion follows. 
\end{proof}

We now establish the existence of minimizers. 

\begin{proof}[Proof of \Cref{thm:existence}] 
Let $\{\bfu^{(h)}\}_{h\in\mN}\subset\mS_{m}$ be any sequence such that $\mF_{\bfV}(\bfu^{(h)})\rightarrow\inf_{\bfu\in\mS_{m}}\mF_{\bfV}(\bfu)$. For each $j=1,\cdots,m$, by using \cite[Theorem~1.10]{KKO26population}, $v_{*,j}$ be the smallest minimizer of $\mF_{V_{j}}$ in $\mK$, which belongs to 
\begin{equation}
\mK_0:=\left\{ u\in \mK : u\le \frac{g_{\alpha}\abs{x}}{g_{\alpha}(R)} \right\} ,\quad g_{\alpha}(r):=\abs{r}^{-\frac{d-2}{2}}K_{\frac{d-2}{2}}(\alpha r), \label{eq:space-K0}
\end{equation}
where $K_{\nu}$ denotes the modified Bessel function of the second kind of order $\nu\in\mR$. Using the properties of Bessel functions (cf. \cite[Appendix~C]{KKO26population}), it satisfies 
\begin{equation*}
v_{*,j} \le C\abs{x}^{-\frac{d-1}{2}}e^{-\alpha\abs{x}} \quad \text{for all $x\in\mR^{d}\setminus\{0\}$} 
\end{equation*}
for some $C=C(R,d,\alpha)$. By \cite[Lemma~3.1(a)]{KKO26population}, we see that 
\begin{equation*}
\mF_{V_{j}}(\min\{u_{j}^{(h)},v_{*,j}\}) \le \mF_{V_{j}}(u_{j}^{(h)}) \quad \text{for all $j=1,\cdots,m$.} 
\end{equation*}
We define 
\begin{equation*}
\tilde{\bfu}^{(h)}=(\tilde{u}_{1}^{(h)},\cdots,\tilde{u}_{m}^{(h)}) ,\quad \tilde{u}_{j}^{(h)} := \min\{u_{j}^{(h)},v_{*,j}\}. 
\end{equation*}
We now see that $\tilde{\bfu}^{(h)}\in\mS_{m}$ for all $h\in\mN$. Thus,  $\{\tilde{\bfu}^{(h)}\}_{h\in\mN}\subset\mS_{m}$ is a sequence such that $\mF_{\bfV}(\tilde{\bfu}^{(h)})\rightarrow\inf_{\bfu\in\mS_{m}}\mF_{\bfV}(\bfu)$ with $\tilde{u}_{j}^{(h)}\in \mK_0$ for all $j=1,\cdots,m$. 
By \cite[Remark~3.3]{KKO26population}, then there exists a subsequence, still denoted by $\{\tilde{\bfu}^{(h)}\}_{h\in\mN}\subset\mS_{m}$, such that $\tilde{\bfu}^{(h)} \rightarrow \bfu_{*}$ strongly in $(L^{2}(\mR^{d}))^{m}$ and weakly in $(H^{1}(\mR^{d}))^{m}$. Consequently, 
\begin{equation*}
\mF_{\bfV}(\bfu_{*}) \le \liminf_{h\rightarrow\infty}\mF_{\bfV}(\tilde{\bfu}^{(h)}). 
\end{equation*}
Using \cite[Theorem~3.12]{Rud87RealComplexAnalysis}, one can choose a further subsequence, still denoted by $\{\tilde{\bfu}^{(h)}\}_{h\in\mN}\subset\mS_{m}$, such that $\tilde{\bfu}^{(h)}(x) \rightarrow \bfu_{*}(x)$ for a.e. $x\in\mR^{d}$, showing that $\bfu_{*}\in\mS_{m}$. 
Finally, the fact $\abs{\bfu_{*}}\not\equiv 0$ follows from \eqref{eq:nontrivial}. 
\end{proof}

We now turn to the proof of \Cref{propr:m-coexist}. 

\begin{proof}[Proof of \Cref{propr:m-coexist}]
For each $r_{j}>1$ to be chosen later, we consider the cut-off function $\breve{\psi}_{r_{j}}\in C_{c}^{\infty}(B_{r_{j}}(0))$ with $0\le \breve{\psi}_{r_{j}}\le 1$ and $\breve{\psi}_{r_{j}}=1$ in $B_{r_{j}-1}(0)$. Let $\breve{v}_{*,j}$ be the smallest minimizer of $\mF_{\breve{V}_{j}}$ in $\mK$. Since both $\breve{v}_{*,j}(x)$ and $\nabla \breve{v}_{*,j}(x)$ decay exponentially as $\abs{x}\rightarrow\infty$, we have 
\begin{equation*}
\norm{\breve{v}_{*,j}\breve{\psi}_{r_{j}}-\breve{v}_{*,j}}_{H^{1}(\mR^{d})} \rightarrow 0 \quad \text{as $r_{j}\rightarrow\infty$,} 
\end{equation*}
and thus 
\begin{equation*}
\mF_{\breve{V}_{j}}(\breve{v}_{*,j}\breve{\psi}_{r_{j}}) \rightarrow \mF_{\breve{V}_{j}}(\breve{v}_{*,j}). 
\end{equation*} 
Therefore, we can choose $r_{1},\cdots,r_{m}>0$, depending only on $d$ and $\breve{V}_{j}$ such that 
\begin{equation*}
\sum_{\ell=1}^{m}\mF_{V_{\ell}}(v_{*,\ell}\psi_{r_{\ell}}) = \sum_{\ell=1}^{m}\mF_{\breve{V}_{\ell}}(\breve{v}_{*,\ell}\breve{\psi}_{r_{\ell}}) < \sum_{\ell\neq j}\mF_{\breve{V}_{\ell}}(\breve{v}_{*,\ell}) = \sum_{\ell\neq j}\mF_{V_{\ell}}(v_{*,\ell}) 
\end{equation*}
with 
\begin{equation*}
v_{*,j}(x) := \breve{v}_{*,j}(x-x^{(j)}) ,\quad \psi_{r_j}(x) := \breve{\psi}_{r_j}(x-x^{(j)}) ,\quad V_{j}(x) := \breve{V}_{j}(x-x^{(j)}) 
\end{equation*} 
for all $j=1,\cdots,m$ and for all $x^{(1)},\cdots,x^{(m)}\in\mR^{d}$. We are now ready to conclude the proof of the theorem under assumption \eqref{eq:mutually-far-away}. Otherwise, suppose that there exists a minimizer $\bfu_{*}$ of \eqref{eq:main-minimizing} in $\mS_{m}$ such that $u_{*,j}\equiv 0$ for some $j=1,\cdots,m$. Then 
\begin{equation*}
\mF_{\bfV}(\bfu_{*}) \ge \sum_{\ell\neq j}\mF_{V_{\ell}}(v_{*,\ell}) > \sum_{\ell=1}^{m}\mF_{V_{\ell}}(v_{*,\ell}\psi_{r_{\ell}}). 
\end{equation*}
On the other hand, condition \eqref{eq:mutually-far-away} ensures that 
\begin{equation*}
(v_{*,1}\psi_{r_{1}},\cdots,v_{*,m}\psi_{r_{m}}) \in \mS_{m}, 
\end{equation*}
which contradicts the minimality of $\bfu_{*}$. 
\end{proof}

Before proving the results concerning the singular limit, we first slightly extend \cite[Lemma~3.1]{KKO26population} in the following lemma. 

\begin{lemma}
Let $L>0$ and let $(L\mK)^{m}$ denote the Cartesian product of $m$ copies of 
\begin{equation*}
L\mK := \left\{Lu : u\in\mK \right\} \equiv\left\{ u\in H^{1}(\mR^{d}) : 0\le u\le L \right\},  
\end{equation*}
where $\mK$ is the space defined in \eqref{eq:constraint-K}, that is 
\begin{equation*}
(L\mK)^{m} := \left\{ \bfu\in (H^{1}(\mR^{d}))^{m} : 0\le u_{i}\le L \right\}. 
\end{equation*}
Let $\bfV^{(1)},\bfV^{(2)}\in(L^{\infty}(\mR^{d}))^{m}$ be potential satisfying $V_{i}^{(1)}\ge V_{i}^{(2)}$ in $\mR^{d}$ for all $i=1,\cdots,m$. For any $\bfu,\bfv\in(L\mK)^{m}$, then 
\begin{equation*}
\begin{aligned} 
\min\{\bfu,\bfv\} &:= \left( \min\{u_{1},v_{1}\},\cdots,\min\{u_{m},v_{m}\} \right) \in (L\mK)^{m}, \\ 
\max\{\bfu,\bfv\} &:= \left( \max\{u_{1},v_{1}\},\cdots,\max\{u_{m},v_{m}\} \right) \in (L\mK)^{m}, 
\end{aligned} 
\end{equation*} 
and the following inequality holds for all $0<\varepsilon\le+\infty$ and $0\le\kappa<+\infty$: 
\begin{equation*}
\calE_{\bfV^{(1)},\varepsilon,\kappa}(\min\{\bfu,\bfv\})+\mF_{\bfV^{(2)}}(\max\{\bfu,\bfv\}) \le \calE_{\bfV^{(1)},\varepsilon,\kappa}(\bfu)+\mF_{\bfV^{(2)}}(\bfv).  
\end{equation*}
In particular, if $\bfv$ minimizes $\mF_{\bfV^{(2)}}$ over $(L\mK)^{m}$, then 
\begin{equation*}
\calE_{\bfV^{(1)},\varepsilon,\kappa}(\min\{\bfu,\bfv\}) \le \calE_{\bfV^{(1)},\varepsilon,\kappa}(\bfu).  
\end{equation*}
\end{lemma}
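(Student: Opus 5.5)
The plan is to reduce the inequality to pointwise identities plus the standard lattice identity for the Dirichlet energy, checking the four pieces of $\calE_{\bfV^{(1)},\varepsilon,\kappa}$ separately.

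\emph{Membership.} For $u_i,v_i\in L\mK$, the functions $\min\{u_i,v_i\}$ and $\max\{u_i,v_i\}$ belong to $H^1(\mR^d)$. They satisfy
\begin{equation*}
\nabla\min\{u_i,v_i\}=\chi_{\{u_i<v_i\}}\nabla u_i+\chi_{\{u_i\ge v_i\}}\nabla v_i
\end{equation*}
a.e. (Stampacchia), and symmetrically for the max. Both lie in $[0,L]$. This gives $\min\{\bfu,\bfv\},\max\{\bfu,\bfv\}\in(L\mK)^m$.

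\emph{Gradient and potential terms.} Set $a=u_i$, $b=v_i$ and $M=\{a<b\}$. The Dirichlet parts add up exactly:
\begin{equation*}
\abs{\nabla\min\{a,b\}}^2+\abs{\nabla\max\{a,b\}}^2=\abs{\nabla a}^2+\abs{\nabla b}^2 \quad\text{a.e.}
\end{equation*}
For the potential part, off $M$ we have $\min=b$ and $\max=a$. There the left side is $V^{(1)}_ib^2+V^{(2)}_ia^2$ and the right side is $V^{(1)}_ia^2+V^{(2)}_ib^2$. Their difference is
\begin{equation*}
(V^{(1)}_i-V^{(2)}_i)(b^2-a^2)\le0,
\end{equation*}
because $V^{(1)}_i\ge V^{(2)}_i$ and $b\le a$ there. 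On $M$ both sides coincide.

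\emph{Crowding penalty.} When $\varepsilon<\infty$, monotonicity of $s\mapsto(s-1)_+^2$ gives $(\min\{a,b\}-1)_+^2\le(a-1)_+^2$. (For $\varepsilon=+\infty$ the term is absent.)

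\emph{Interaction penalty.} Since all functions are nonnegative, $\min\{u_i,v_i\}^2\min\{u_j,v_j\}^2\le u_i^2u_j^2$ pointwise, so the $\kappa$-term (with $\kappa\ge0$) can only decrease.

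Summing over $i$, integrating, and adding the three comparisons yields the main inequality. If $\calE_{\bfV^{(1)},\varepsilon,\kappa}(\bfu)=+\infty$ there is nothing to prove. Otherwise, all integrals are finite: the data are bounded, $\bfu,\bfv\in H^1$, and the quartic terms are bounded by $L^2u_i^2$.

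\emph{The ``in particular'' part.} Since $\max\{\bfu,\bfv\}\in(L\mK)^m$ and $\bfv$ minimizes $\mF_{\bfV^{(2)}}$ there, we have $\mF_{\bfV^{(2)}}(\max\{\bfu,\bfv\})\ge\mF_{\bfV^{(2)}}(\bfv)$. Both values are finite, so subtracting gives the claim.

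The only delicate point is the potential-term comparison: $V^{(1)}$ must be attached to the min side and $V^{(2)}$ to the max side for the sign to work. The rest is routine.
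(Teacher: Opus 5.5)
Your proposal is correct and follows the paper's proof. The paper gets the $\mF$-part by citing the lattice argument of \cite[Lemma~3.1]{KKO26population}, which you write out explicitly: the Stampacchia gradient identity, plus the sign of $(V^{(1)}_i-V^{(2)}_i)(v_i^2-u_i^2)$ on $\{u_i\ge v_i\}$. It then adds the same two pointwise monotonicity comparisons you use for the crowding and quartic interaction penalties.
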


\begin{proof}
Following exactly the same arguments as in \cite[Lemma~3.1]{KKO26population}, we obtain 
\begin{equation*}
\mF_{\bfV^{(1)}}(\min\{\bfu,\bfv\})+\mF_{\bfV^{(2)}}(\max\{\bfu,\bfv\}) \le \mF_{\bfV^{(1)}}(\bfu)+\mF_{\bfV^{(2)}}(\bfv).  
\end{equation*}
Moreover, 
\begin{equation*}
\int_{\mR^{d}}(\min\{u_{i},v_{i}\}-1)_{+}^{2}\,\rmd\sfm \le \int_{\mR^{d}}(u_{i}-1)_{+}^{2}\,\rmd\sfm \quad \text{for all $i=1,\cdots,m$,}
\end{equation*}
and 
\begin{equation*}
\int_{\mR^{d}}(\min\{u_{i},v_{i}\})^{2}(\min\{u_{j},v_{j}\})^{2}\,\rmd\sfm \le \int_{\mR^{d}}u_{i}^{2}u_{j}^{2}\,\rmd\sfm \quad \text{for all $i,j\in\{1,\cdots,m\}$.}
\end{equation*}
Summing these three inequalities according to \eqref{eq:energy1} yields the desired result. 
\end{proof}

We now prove the results concerning the singular limit. 

\begin{proof}[Proof of \Cref{thm:well-posed}\ref{itm:singular-limit-a}] 
Since the proof is somewhat lengthy, we divide it into several steps for clarity. 

\medskip 

\noindent \emph{Step 1: First, we show that $\inf_{(H_{+}^{1}(\mR^{d}))^{m}}\calE_{\bfV,\varepsilon,\kappa}>-\infty$.}
By \eqref{eq:nonvanishing}, $(V_{i})_{-}=0$ and $(V_{i})_{+}\geq\alpha^{2}$ a.e. in $\mR^d\setminus B_R$. Using the inequality
$s^2\leq2+2(s-1)_+^2$ for $s\geq0$, we have 
\begin{equation} 
\begin{aligned}
 &\frac{1}{2}\int_{B_R} (V_{i})_{-} u_{i}^{2} \,\rmd\sfm 
\leq \frac{\norm{\bfV_{-}}_{L^{\infty}(\mR^{d})}}{2}\int_{B_R}u_i^2\,\rmd\sfm
\leq  \frac{\norm{\bfV_{-}}_{L^{\infty}(\mR^{d})}}{2}\int_{B_R}(2+2(u_i-1)_+^2)\,\rmd\sfm \\
&\leq \norm{\bfV_{-}}_{L^{\infty}(\mR^{d})}\abs{B_R}+\norm{\bfV_{-}}_{L^{\infty}(\mR^{d})}\|(u_i-1)_+\|_{L^2(\mR^{d})}^2.   
\end{aligned} \label{ineq1} 
\end{equation} 
It follows that
\begin{equation*}
\begin{aligned} 
& \frac{1}{2\varepsilon}\|(u_i-1)_+\|_{L^2(\mR^{d})}^2-\frac{1}{2}\int_{B_R}(V_{i})_{-}u_i^2\,\rmd\sfm \\
&\quad \geq -\norm{\bfV_{-}}_{L^{\infty}(\mR^{d})}\abs{B_R}+\Big(\frac{1}{2\varepsilon}-\norm{\bfV_{-}}_{L^{\infty}(\mR^{d})}\Big)\|(u_i-1)_+\|_{L^2(\mR^{d})}^2.  
\end{aligned} 
\end{equation*}
Consequently, for any $\bfu\in(H_{+}^{1}(\mR^{d}))^{m}$, if $\varepsilon \norm{\bfV^{-}}_{L^{\infty}(\mR^{d})}\leq1/4$, then
\begin{equation} 
\begin{aligned}
\calE_{\bfV,\varepsilon,\kappa}(\mathbf u)
&\geq \sum_{i=1}^{m}\|\nabla u_i\|_{L^2(\mR^{d})}^2
+\alpha^{2}\sum_{i=1}^{m}\|u_i\|_{L^2(\mathbb R^d\setminus B_R)}^{2}
+\frac{1}{2\varepsilon}\sum_{i=1}^{m}\|(u_i-1)_+\|_{L^2(\mR^{d})}^{2} \\
&\quad + \kappa\sum_{1\le i<j\le m}\|u_{i}u_{j}\|_{L^2(\mR^{d})}^{2} 
-2m\norm{\bfV_{-}}_{L^{\infty}(\mR^{d})}\abs{B_R}.
\end{aligned} \label{coercivity1} 
\end{equation} 

\medskip 

\noindent \emph{Step 2: Next, we show uniform boundedness of the zero-sublevel sets.} 
Fix $\overline{\varepsilon}\in(0,(4\norm{\bfV_{-}}_{L^{\infty}(\mR^{d})})^{-1})$. For each $\varepsilon\in(0,\overline{\varepsilon})$, we consider the sublevel set 
\begin{equation}
\left\{\bfu\in (H_{+}^{1}(\mR^{d}))^{m}: \calE_{\bfV,\varepsilon,\kappa}(\bfu)\leq 0\right\} \label{eq:sublevel-set}
\end{equation}
By \eqref{coercivity1}, every element $\bfu$ of the sublevel set \eqref{eq:sublevel-set} satisfies 
\begin{equation*}
\sum_{i=1}^{m}\|\nabla u_i\|_{L^2(\mR^{d})}^2\leq2C_0,\quad
\sum_{i=1}^{m}\|u_i\|_{L^2(\mathbb{R}^d\setminus B_R)}^2\leq\frac{2C_0}{{\alpha}^2},
\quad 
\sum_{i=1}^{m}\|(u_i-1)_+\|_{L^2(\mR^{d})}^2
\leq4\varepsilon C_0, 
\end{equation*}
where $C_0:=m\norm{\bfV_{-}}_{L^{\infty}(\mR^{d})}\abs{B_R}\geq 0$. 
On the other hand, from \eqref{ineq1}, we obtain
\begin{equation*} 
\sum_{i=1}^m\|u_i\|_{L^2(B_R)}^2
\leq 2m\abs{B_R}+8\bar\varepsilon C_0.
\end{equation*} 
Consequently, we have
\[
\sum_{i=1}^{m}\|u_i\|_{H^1(\mathbb{R}^d)}^2
\leq
\overbrace{2C_0+\frac{2C_0}{{\alpha}^2}+2m\abs{B_R}+8\bar\varepsilon C_0}^{\text{independent of $\varepsilon\in(0,\overline{\varepsilon})$ and $\kappa>0$}},
\]
which proves the uniform boundedness.

\medskip 

\noindent \emph{Step 3: We now prove the existence of a global minimizer in $(H_{+}^{1}(\mR^{d}))^{m}$ by constructing a suitable minimizing sequence by truncation.} 
Let $\{\tilde{\bfu}^{(h)}\}_{h\in\mN}$ be any minimizing sequence for $\calE_{\bfV,\varepsilon,\kappa}$ over $(H_{+}^{1}(\mR^{d}))^{m}$. For each $\varepsilon\in(0,\overline{\varepsilon})$, we set 
\begin{equation*}
L_{\varepsilon} := \frac{1}{1-\varepsilon\norm{\bfV_{-}}_{L^{\infty}(\mR^{d})}} \ge 1, 
\end{equation*}
and the mapping 
\begin{equation*}
G_{i}(x,s) := V_{i}(x)s^{2} + \frac{1}{\varepsilon}(s-1)_{+}^{2} \quad \text{for all $i=1,\cdots,m$,}
\end{equation*}
so that 
\begin{equation*}
\calE_{\bfV,\varepsilon,\kappa}(\bfu) = \sum_{i=1}^{m}\int_{\mR^{d}} \left(\abs{\nabla u_{i}(x)}^{2} + G_{i}(x,u_{i})\right)\,\rmd\sfm + \kappa\sum_{1\le i<j\le m}\int_{\mR^{d}}u_{i}^{2}u_{j}^{2}\,\rmd\sfm. 
\end{equation*}
We now define the sequence $\{\breve{\bfu}^{(h)}\}_{h\in\mN}\subset(L_{\varepsilon}\mK)^{m}$ by 
\begin{equation*}
\breve{u}_{i}^{(h)} := \min\{\tilde{u}_{i}^{(h)},L_{\varepsilon}\} \quad \text{for all $i=1,\cdots,m$.} 
\end{equation*}
It is not difficult to see that 
\begin{equation}
\int_{\mR^{d}}\abs{\nabla \breve{u}_{i}^{(h)}}^{2}\,\rmd\sfm \le \int_{\mR^{d}}\abs{\nabla \tilde{u}_{i}^{(h)}}^{2}\,\rmd\sfm ,\quad \int_{\mR^{d}}(\breve{u}_{i}^{(h)})^{2}(\breve{u}_{j}^{(h)})^{2}\,\rmd\sfm \le \int_{\mR^{d}}(\tilde{u}_{i}^{(h)})^{2}(\tilde{u}_{j}^{(h)})^{2}\,\rmd\sfm \label{eq:truncate-minimizing-sequence1}
\end{equation}
for all $i,j\in\{1,\cdots,m\}$. 
Since, for $s> L_{\varepsilon}$, we have 
\begin{equation}
\frac{1}{2}\frac{\partial}{\partial s}G_{i}(x,s)\ge -\norm{\bfV_{-}}_{L^{\infty}(\mR^{d})}s + \frac{s-1}{\varepsilon} = \frac{(1-\varepsilon\norm{\bfV_{-}}_{L^{\infty}(\mR^{d})})s-1}{\varepsilon} > 0 \label{eq:G-strict-increasing}
\end{equation}
for all $x\in\mR^{d}$, we see that 
\begin{equation}
\int_{\mR^{d}} G_{i}(x,\breve{u}_{i}^{(h)})\,\rmd\sfm \le \int_{\mR^{d}} G_{i}(x,\tilde{u}_{i}^{(h)})\,\rmd\sfm. \label{eq:truncate-minimizing-sequence2}
\end{equation}
Combining \eqref{eq:truncate-minimizing-sequence1} and \eqref{eq:truncate-minimizing-sequence2}, we see that 
\begin{equation*}
\calE_{\bfV,\varepsilon,\kappa}(\breve{\bfu}^{(h)}) \le \calE_{\bfV,\varepsilon,\kappa}(\tilde{\bfu}^{(h)}), 
\end{equation*}
which shows that $\{\breve{\bfu}^{(h)}\}_{h\in\mN}\subset(L_{\varepsilon}\mK)^{m}$ is also a minimizing sequence for $\calE_{\bfV,\varepsilon,\kappa}$ over $(H_{+}^{1}(\mR^{d}))^{m}$. 

We consider the potential $V_{*}\in L^{\infty}(\mR^{d})$ defined by 
\begin{equation*}
V_{*}(x)=\alpha^{2} \text{ for all $x\in \mR^{d}\setminus B_{R}$} ,\quad V_{*}(x)=-\tilde{\beta}^{2}\text{ for all $x\in B_{R}$} 
\end{equation*}
for any $\tilde{\beta}^{2}>\max\{\lambda^{*}(B_{R}),\norm{\bfV_{-}}_{L^{\infty}(\mR^{d})}\}$. Then there exists a unique nontrivial global minimizer $v_{\varepsilon}$ of $\mF_{V_{*}}$ over $L_{\varepsilon}\mK$, which is necessarily of the form 
\begin{equation*}
v_{\varepsilon}(x)=\left\{\begin{aligned}
&L_{\varepsilon} && \text{for all $x\in B_{R_{*}}$,} \\ 
&a_{\varepsilon}(\tilde{\beta}\abs{x})^{-\frac{d-2}{2}}J_{\frac{d-2}{2}}(\tilde{\beta}\abs{x}) + b_{\varepsilon}(\tilde{\beta}\abs{x})^{-\frac{d-2}{2}}Y_{\frac{d-2}{2}}(\tilde{\beta}\abs{x}) && \text{for all $x\in B_{R}\setminus B_{R_{*}}$,} \\ 
& c_{\varepsilon}(\alpha\abs{x})^{-\frac{d-2}{2}}K_{\frac{d-2}{2}}(\alpha\abs{x}) && \text{for all $x\in\mR^{d}\setminus B_{R}$,} 
\end{aligned}\right. 
\end{equation*}
for some constants $a_{\varepsilon},b_{\varepsilon},c_{\varepsilon}$. It is not difficult to see that 
\begin{equation}
v_{\varepsilon} \in L_{\varepsilon}\mK_{0}:=\left\{L_{\varepsilon}v : v\in\mK_{0}\right\}, \label{eq:truncation}
\end{equation}
where $\mK_{0}$ is the space given in \eqref{eq:space-K0}. 
Choosing 
\begin{equation*}
\bfV^{(1)}=\bfV ,\quad \bfV^{(2)}=(V_{*},\cdots,V_{*}) ,\quad \bfu=\breve{\bfu}^{(h)} ,\quad \bfv=(v_{\varepsilon},\cdots,v_{\varepsilon})
\end{equation*}
yields 
\begin{equation*}
\calE_{\bfV,\varepsilon,\kappa}(\bfu^{(h)}) \le \calE_{\bfV,\varepsilon,\kappa}(\breve{\bfu}^{(h)}), \quad \bfu^{(h)}:=\min\{\breve{\bfu}^{(h)},(v_{\varepsilon},\cdots,v_{\varepsilon})\}
\end{equation*}
which shows that $\{\bfu^{(h)}\}_{h\in\mN}\subset(L_{\varepsilon}\mK_{0})^{m}$ is also a minimizing sequence for $\calE_{\bfV,\varepsilon,\kappa}$ over $(H_{+}^{1}(\mR^{d}))^{m}$. 

Similar to the proof of \Cref{thm:existence}, by \cite[Remark~3.3]{KKO26population} and \cite[Theorem~3.12]{Rud87RealComplexAnalysis}, there exists a subsequence, still denoted by $\{\bfu^{(h)}\}_{h\in\mN}\subset(L_{\varepsilon}\mK_{0})^{m}$, such that $\bfu^{(h)}\rightarrow\bfu_{(\varepsilon,\kappa)}$ strongly in $(L^{2}(\mR^{d}))^{m}$, weakly in $(H^{1}(\mR^{d}))$, and almost everywhere in $\mR^{d}$. In particular, $\bfu_{(\varepsilon,\kappa)}\in(L_{\varepsilon}\mK_{0})^{m}$. We now prove that $\bfu_{(\varepsilon,\kappa)}=(u_{(\varepsilon,\kappa),1},\cdots,u_{(\varepsilon,\kappa),m})$ attains the infimum. Since, in any normed space $\mX$, the norm $\norm{\cdot}_{\mX}$ is lower semicontinuous with respect to the weak topology of $\mX$, see, e.g. \cite[Corollary~3.9]{Bre11PDE}, we obtain 
\begin{equation*}
\mF_{\bfV}(\bfu_{(\varepsilon,\kappa)}) \le \liminf_{h\rightarrow\infty} \mF_{\bfV}(\bfu^{(h)}). 
\end{equation*}
Finally, in view of \eqref{eq:energy1}, Fatou's lemma yields 
\begin{equation*} 
\calE_{\bfV,\varepsilon,\kappa}(\bfu_{(\varepsilon,\kappa)})
\le
\liminf_{h\to\infty}
\calE_{\bfV,\varepsilon,\kappa} (\bfu^{(h)}) = \inf_{(H_{+}^{1}(\mR^{d}))^{m}}\calE_{\bfV,\varepsilon,\kappa}, 
\end{equation*}
hence $\bfu_{(\varepsilon,\kappa)}\in(L_{\varepsilon}\mK_{0})^{m}$ is a global minimizer of $\calE_{\bfV,\varepsilon,\kappa}$ over $(H_{+}^{1}(\mR^{d}))^{m}$. 
\end{proof}


\begin{remark}[An alternative proof of \emph{Step~3}]\label{rem:LSC}
If one only seeks to establish the existence of minimizers, without requiring pointwise bounds, an alternative proof can be given using a diagonal argument to construct a sequence that converges strongly in $(L_{\mathrm{loc}}^{2}(\mR^{d}))^{m}$. In contrast, \emph{Step~3} above can be viewed as a constructive proof, yielding a sequence that satisfies an exponentially decaying pointwise bound and converges strongly even in $(L^{2}(\mR^{d}))^{m}$. In fact, the entirety of \emph{Step~3} can be replaced by \Cref{rem:LSC} without affecting our main results, since the pointwise bound is established separately in \Cref{thm:well-posed}\ref{itm:singular-limit-b}. 

Let $\{\bfu^{(h)}\}_{h\in\mN}$ be any minimizing sequence for $\calE_{\bfV,\varepsilon,\kappa}$ over $(H_{+}^{1}(\mR^{d}))^{m}$. 
Since $\calE_{\bfV,\varepsilon,\kappa}(\mathbf{0})=0$, we may extract a subsequence, still denoted by $\{\bfu^{(h)}\}_{h\in\mN}$, that belongs to the sublevel set \eqref{eq:sublevel-set}. By the Banach-Alaoglu theorem, one can then extract a further subsequence, again denoted by $\{\bfu^{(h)}\}_{h\in\mN}$, such that 
\begin{equation*}
\bfu^{(h)} \rightarrow \bfu \text{ in $(H^{1}(\mR^{d}))^{m}$ weakly.}
\end{equation*} 

Recall that, by the Rellich-Kondrachov theorem, the local embedding $H^{1}(B_{r})\hookrightarrow L^{2}(B_{r})$ is compact for every $r>0$.  
Taking $r=1$, we can extract a subsequence $\left\{\bfu^{(h,1)}\right\}_{h\in\mN}$ of $\left\{\bfu^{(h)}\right\}_{h\in\mN}$ that converges strongly in $L^{2}(B_{1})$. Inductively, for each $h'\in\mN$, we can extract a subsequence $\left\{\bfu^{(h,h'+1)}\right\}_{h\in\mN}$ of $\left\{\bfu^{(h,h')}\right\}_{h\in\mN}$ that converges strongly in $L^{2}(B_{h'})$. 
Thus, the `diagonal sequence' $\left\{\bfu^{(h,h)}\right\}_{h\in\mN}$ converges strongly in $(L^{2}(B_{r}))^{m}$ for every $r>0$.
In particular, by \cite[Theorem~3.12]{Rud87RealComplexAnalysis}, after passing to a further subsequence, still denoted  by $\left\{\bfu^{(h,h)}\right\}_{h\in\mN}\subset(H_{+}^{1}(\mR^{d}))^{m}$, we may assume that it converges to some limit $\bfu_{(\varepsilon,\kappa)}\in(H_{+}^{1}(\mR^{d}))^{m}$ a.e. in $\mR^{d}$. 

We now prove that $\bfu_{(\varepsilon,\kappa)}=(u_{(\varepsilon,\kappa),1},\cdots,u_{(\varepsilon,\kappa),m})$ attains the infimum. Recall that, in any normed space $\mX$, the norm $\norm{\cdot}_{\mX}$ is lower semicontinuous with respect to the weak topology of $\mX$, see, e.g. \cite[Corollary~3.9]{Bre11PDE}. By assumption \eqref{eq:nonvanishing}, the mapping 
\begin{equation*}
\bfu\in(H^{1}(\mR^{d}))^{m} \mapsto \left(\sum_{i=1}^m\int_{\mR^d}
\left(\abs{\nabla u_{i}}^{2} + (V_{i})_{+} u_{i}^{2}\right)\,\rmd\sfm + \norm{\bfu}_{L^{2}(B_{R})}^{2}\right)^{1/2} 
\end{equation*}
defines a norm on $(H^{1}(\mR^{d}))^{m}$. Therefore, the weak lower semicontinuity of the norm yields
\begin{equation*}
\begin{aligned}
& \sum_{i=1}^m\int_{\mR^d}
\left(\abs{\nabla u_{(\varepsilon,\kappa),i}}^{2} + (V_{i})_{+} u_{(\varepsilon,\kappa),i}^{2}\right)\,\rmd\sfm + \norm{\bfu_{(\varepsilon,\kappa)}}_{L^{2}(B_{R})}^{2} \\ 
&\quad \le \liminf_{h\to\infty}
\left( \sum_{i=1}^m\int_{\mR^d}
\left(\abs{\nabla u_{i}^{(h,h)}}^{2} + (V_{i})_{+} (u_{i}^{(h,h)})^{2} \right)\,\rmd\sfm + \norm{\bfu^{(h,h)}}_{L^{2}(B_{R})}^{2}\right). 
\end{aligned}
\end{equation*}
Using $\bfu^{(h,h)}\to\bfu_{(\varepsilon,\kappa)}$ strongly in $(L^2(B_R))^{m}$, we obtain 
\begin{equation}
\begin{aligned} 
& \sum_{i=1}^m\int_{\mR^d}
\left(\abs{\nabla u_{(\varepsilon,\kappa),i}}^{2} + (V_{i})_{+} u_{(\varepsilon,\kappa),i}^{2}\right)\,\rmd\sfm \\ 
&\quad \leq\liminf_{h\to\infty}
\sum_{i=1}^m\int_{\mR^d}
\left(\abs{\nabla u_{i}^{(h,h)}}^{2} + (V_{i})_{+} (u_{i}^{(h,h)})^{2} \right)\,\rmd\sfm. 
\end{aligned}\label{eq:lsc-mF}
\end{equation}
and 
\begin{equation*}
\int_{\mR^d}(V_{i})_{-} (u_{i}^{(h,h)})^{2}\,\rmd\sfm \rightarrow \int_{\mR^d}(V_{i})_{-}u_{(\varepsilon,\kappa),i}^{2} \,\rmd\sfm.  
\end{equation*}
since $\supp\,((V_{i})_{-})\subset \overline{B_{R}}$. 
Therefore, 
\begin{equation*} 
\calE_{\bfV,\varepsilon,\kappa}(\bfu_{(\varepsilon,\kappa)})
\le
\liminf_{h\to\infty}
\calE_{\bfV,\varepsilon,\kappa} (\bfu^{(h,h)}) = \inf_{(H_{+}^{1}(\mR^{d}))^{m}}\calE_{\bfV,\varepsilon,\kappa}, 
\end{equation*}
and hence $\bfu_{(\varepsilon,\kappa)}$ is a global minimizer of $\calE_{\bfV,\varepsilon,\kappa}$ over $(H_{+}^{1}(\mR^{d}))^{m}$. 
\end{remark}

We have already constructed a minimizer that satisfies \eqref{eq:pointwise-bound}. We now show that this bound is, in fact, necessary for every minimizer, thus proving \Cref{thm:well-posed}\ref{itm:singular-limit-b}. 

\begin{proof}[Proof of \Cref{thm:well-posed}\ref{itm:singular-limit-b}]
Let $\bfu_{(\varepsilon,\kappa)}$ be any global minimizer of $\calE_{\bfV,\varepsilon,\kappa}$ over $(H_{+}^{1}(\mR^{d}))^{m}$. 
We first show that 
\begin{equation}
\max_{i=1,\cdots,m} u_{(\varepsilon,\kappa),i}(x) \le L_{\varepsilon}=\frac{1}{1-\varepsilon\norm{\bfV_{-}}_{L^{\infty}(\mR^{d})}} \label{eq:pointwise-bound-1}
\end{equation} 
for a.e. $x\in\mR^{d}$. Suppose, to the contrary, that \eqref{eq:pointwise-bound-1} fails. Then there exists $i=1,\cdots,m$ such that $\left\{u_{(\varepsilon,\kappa),i}>L_{\varepsilon}\right\}$ has positive measure. In this case, by \eqref{eq:G-strict-increasing}, we have the strict inequality 
\begin{equation*}
\int_{\mR^{d}} G_{i}(x,\min\{u_{(\varepsilon,\kappa),i},L_{\varepsilon}\})\,\rmd\sfm < \int_{\mR^{d}} G_{i}(x,u_{(\varepsilon,\kappa),i})\,\rmd\sfm. 
\end{equation*} 
By an argument similar to that used in the derivation of \eqref{eq:truncate-minimizing-sequence1} and \eqref{eq:truncate-minimizing-sequence2}, we see that 
\begin{equation*}
\calE_{\bfV,\varepsilon,\kappa}(u_{(\varepsilon,\kappa),1},\cdots,u_{(\varepsilon,\kappa),(i-1)},\min\{u_{(\varepsilon,\kappa),i},L_{\varepsilon}\},u_{(\varepsilon,\kappa),(i+1)},u_{(\varepsilon,\kappa),m}) < \calE_{\bfV,\varepsilon,\kappa}(\bfu_{(\varepsilon,\kappa)}), 
\end{equation*}
which contradicts the minimality of $\bfu_{(\varepsilon,\kappa)}$.

We next prove \eqref{eq:EL}. As noted in \Cref{rem:even-extension}, $\bfu_{(\varepsilon,\kappa)}$ is also a minimizer of the even extension 
\begin{equation*}
\begin{aligned}
\tilde{\calE}_{\bfV,\varepsilon,\kappa}(\bfu) &:= \calE_{\bfV,\varepsilon,\kappa}(\abs{u_{1}},\cdots,\abs{u_{m}}) \\ 
&= \mF_{\bfV}(\bfu) +\frac{1}{\varepsilon}\sum_{i=1}^{m} \int_{\mR^d}(\abs{u_{i}}-1)_{+}^{2}\,\rmd\sfm + \kappa \sum_{1\leq i<j\leq m} \int_{\mR^d}u_{i}^{2}u_{j}^{2}\, \rmd\sfm
\end{aligned}
\end{equation*}
over $(H^{1}(\mR^{d}))^{m}$. Fix any nonnegative function $\phi\in C_{c}^{\infty}(\mR^{d})$. For each $i=1,\cdots,m$, we have $\bfu_{(\varepsilon,\kappa)}\pm\delta\phi\mathbf{e}_{i} \in (H^{1}(\mR^{d}))^{m}$ for all $\delta>0$, where $\mathbf{e}_{i}$ denotes the $i^{\rm th}$ column of the $m\times m$ matrix. By the minimality of $\bfu_{(\varepsilon,\kappa)}$,  
\allowdisplaybreaks
\begin{align*}
0 &\le \tilde{\calE}_{\bfV,\varepsilon,\kappa}(\bfu_{(\varepsilon,\kappa)}\pm\delta\phi\mathbf{e}_{i}) - \tilde{\calE}_{\bfV,\varepsilon,\kappa}(\bfu_{(\varepsilon,\kappa)}) \\  
&= \pm2\delta\int_{\mR^{d}} \left(\nabla u_{(\varepsilon,\kappa),i}\cdot\nabla \phi + V_{i}u_{(\varepsilon,\kappa),i}\phi\right) \,\rmd\sfm + \overbrace{\delta^{2}\int_{\mR^{d}}(\abs{\nabla\phi}^{2}+V_{i}\phi^{2})\,\rmd\sfm}^{=O(\delta^{2})} \\ 
&\qquad + \frac{1}{\varepsilon}\int_{\mR^{d}} \left( (\abs{u_{(\varepsilon,\kappa),i}\pm\delta\phi}-1)_{+}^{2} - (\abs{u_{(\varepsilon,\kappa),i}}-1)_{+}^{2} \right)\,\rmd\sfm \\
&\qquad \pm2\delta\kappa\sum_{j\neq i}\int_{\mR^{d}} u_{(\varepsilon,\kappa),i}u_{(\varepsilon,\kappa),j}^{2}\phi\,\rmd\sfm + \overbrace{\delta^{2}\kappa\sum_{j\neq i}\int_{\mR^{d}}\phi^{2}u_{(\varepsilon,\kappa),j}^{2}\,\rmd\sfm}^{\text{$=O(\delta^{2})$, because \eqref{eq:pointwise-bound-1}}} \\ 
&\le \pm2\delta\int_{\mR^{d}} \left(\nabla u_{(\varepsilon,\kappa),i}\cdot\nabla \phi + V_{i}u_{(\varepsilon,\kappa),i}\phi\right) \,\rmd\sfm \\ 
&\quad \pm2\delta \frac{1}{\varepsilon}\int_{\{\abs{u_{(\varepsilon,\kappa),i}\pm\delta\phi}>1\}\cap\{\abs{u_{(\varepsilon,\kappa),i}}>1\}} (u_{(\varepsilon,\kappa),i}-1)\phi\,\rmd\sfm \\ 
&\quad + \frac{1}{\varepsilon} \overbrace{\int_{\{\abs{u_{(\varepsilon,\kappa),i}\pm\delta\phi}>1\}\cap\{\abs{u_{(\varepsilon,\kappa),i}}\le1\}}  (\abs{u_{(\varepsilon,\kappa),i}\pm\delta\phi}-1)^{2}\,\rmd\sfm}^{\text{$=O(\delta^{2})$, because $0<\abs{u_{(\varepsilon,\kappa),i}\pm\delta\phi}-1\le\abs{u_{(\varepsilon,\kappa),i}}-1+\delta\abs{\phi}\le\delta\abs{\phi}$}} \\
&\quad \pm2\delta\kappa\sum_{j\neq i}\int_{\mR^{d}} u_{(\varepsilon,\kappa),i}u_{(\varepsilon,\kappa),j}^{2}\phi\,\rmd\sfm + O(\delta^{2}). 
\end{align*}
Dividing by $2\delta$ and letting $\delta\searrow0$, we obtain 
\begin{equation*}
\pm\left(\Delta u_{(\varepsilon,\kappa),i} - V_{i}u_{(\varepsilon,\kappa),i}\right) \le \pm\left(\frac{1}{\varepsilon}(u_{(\varepsilon,\kappa),i}-1)_{+} +\kappa u_{(\varepsilon,\kappa),i}\sum_{j\neq i}u_{j}^{2}\right) \quad \text{in $\mR^{d}$,} 
\end{equation*}
we conclude \eqref{eq:EL}.

It remains to prove $\bfu_{(\varepsilon,\kappa)}\in (L_{\varepsilon}\mK_{0})^{m}$. 
Define 
\begin{equation*}
\Psi:=L_{\varepsilon}\frac{g_{\alpha}(\abs{x})}{g_{\alpha}(R)} ,\quad g_{\alpha}(r):=r^{-\frac{d-2}{2}}K_{\frac{d-2}{2}}(\alpha r) \quad \text{for all $r>0$.} 
\end{equation*} 
For each $i=1,\cdots,m$, we see that $\Psi-u_{(\varepsilon,\kappa),i}\in H^{1}(\mR^{d}\setminus\overline{B_{R}})$ satisfies 
\begin{equation*}
(\Delta-\alpha^{2})(\Psi-u_{(\varepsilon,\kappa),i}) = -\overbrace{(\Delta u_{(\varepsilon,\kappa),i}
-V_{i}u_{(\varepsilon,\kappa),i})}^{\ge0\text{ by \eqref{eq:EL}}}-\overbrace{(V_{i}-\alpha^{2})u_{(\varepsilon,\kappa),i}}^{\ge0 \text{ by \eqref{eq:nonvanishing}}} \le 0 \quad \text{ in $\mR^{d}\setminus\overline{B_{R}}$,} 
\end{equation*}
and $(\Psi-u_{(\varepsilon,\kappa),i})|_{\partial B_{R}}=(L_{\varepsilon}-u_{(\varepsilon,\kappa),i})|_{\partial B_{R}}\overset{\eqref{eq:pointwise-bound-1}}{\ge}0$. By the comparison principle (cf \cite[Lemma~A.1]{KKO26population})\footnote{The key point in the proof is that $\Psi-u_{(\varepsilon,\kappa),i}\in H^{1}(\mR^{d}\setminus\overline{B_{R}})$ and $(\Psi-u_{(\varepsilon,\kappa),i})|_{\partial B_{R}}\ge0$ together imply $(\Psi-u_{(\varepsilon,\kappa),i})_{-}\in H_{0}^{1}(\mR^{d}\setminus\overline{B_{R}})$.}, we have $\Psi-u_{(\varepsilon,\kappa),i}\ge0$ in $\mR^{d}\setminus\overline{B_{R}}$ for all $i=1,\cdots,m$. 
Finally, combining this inequality with \eqref{eq:pointwise-bound-1}, we conclude that $\bfu_{(\varepsilon,\kappa)}\in (L_{\varepsilon}\mK_{0})^{m}$. 
\end{proof} 

We now show that the minimization problem \eqref{eq:main-minimizing} arises as the variational limit of energies with increasingly strong crowding and interspecific competition penalties. 

\begin{proof}[Proof of \Cref{thm:singular-limit}\ref{itm:singular-limit-c}] 
In view of \Cref{rem:property-b}, it remains to prove property \ref{eq:Gamma-1}, following the strategy outlined in \Cref{rem:property-a-hint}. If 
\begin{equation*}
\liminf_{n\rightarrow\infty}\calE_{\bfV,\varepsilon_{n},\kappa_{n}}(\bfw^{(n)}) = +\infty, 
\end{equation*}
then there is nothing to prove. We therefore consider the case 
\begin{equation*}
\liminf_{n\rightarrow\infty}\calE_{\bfV,\varepsilon_{n},\kappa_{n}}(\bfw^{(n)}) < +\infty. 
\end{equation*}
From \eqref{coercivity1}, we have 
\begin{equation*} 
\begin{aligned}
& \calE_{\bfV,\varepsilon_{n},\kappa_{n}}(\bfw^{(n)}) + 2m\norm{\bfV_{-}}_{L^{\infty}(\mR^{d})}\abs{B_R} \\ 
& \quad \ge \frac{1}{2\varepsilon_{n}}\sum_{i=1}^{m}\|(w_{i}^{(n)}-1)_+\|_{L^2(\mR^{d})}^{2} +\kappa_{n} \sum_{1\le i<j\le m}\|w_{i}^{(n)}w_{j}^{(n)}\|_{L^2(\mR^{d})}^{2}, 
\end{aligned}  
\end{equation*} 
which implies 
\begin{equation}
\begin{aligned}
\liminf_{n\rightarrow\infty}\sum_{i=1}^{m}\|(w_{i}^{(n)}-1)_+\|_{L^2(\mR^{d})}^{2} &\le 2 \liminf_{n\rightarrow\infty} \left[\varepsilon_{n}\scalebox{0.8}{$\left(\calE_{\bfV,\varepsilon_{n},\kappa_{n}}(\bfw^{(n)}) + 2m\norm{\bfV_{-}}_{L^{\infty}(\mR^{d})}\abs{B_R}\right)$}\right] = 0, \\ 
\liminf_{n\rightarrow\infty}\sum_{1\le i<j\le m}\|w_{i}^{(n)}w_{j}^{(n)}\|_{L^2(\mR^{d})}^{2} &\le \liminf_{n\rightarrow\infty}\left[ \frac{1}{\kappa_{n}}\scalebox{0.8}{$\left(\calE_{\bfV,\varepsilon_{n},\kappa_{n}}(\bfw^{(n)}) + 2m\norm{\bfV_{-}}_{L^{\infty}(\mR^{d})}\abs{B_R}\right)$} \right]=0. 
\end{aligned} \label{eq:hint-done}
\end{equation}
Using the diagonal argument in \Cref{rem:LSC}, one can extract a subsequence, still denoted by $\left\{\bfw^{(n)}\right\}_{n\in\mN}$, such that 
\begin{equation*}
\bfw^{(n)}\rightarrow\bfw \quad \text{a.e. in $\mR^{d}$.} 
\end{equation*}
Therefore, applying Fatou's lemma to \eqref{eq:hint-done}, we obtain 
\begin{equation*}
(w_{i}^{(n)}-1)_{+}\rightarrow0 ,\quad w_{i}^{(n)}w_{j}^{(n)}\rightarrow 0 \quad \text{a.e. in $\mR^{d}$,} 
\end{equation*}
which implies that $\bfw\in\mS_{m}$. Finally, adapting the argument used to prove \eqref{eq:lsc-mF}, we conclude that 
\begin{equation*}
\liminf_{n\rightarrow\infty}\calE_{\bfV,\varepsilon_{n},\kappa_{n}}(\bfw^{(n)}) \ge \liminf_{n\rightarrow\infty}\mF_{\bfV}(\bfw^{(n)}) \ge \mF_{\bfV}(\bfw) = \calE_{\bfV,0,+\infty}(\bfw), 
\end{equation*}
which completes the proof. 
\end{proof}

\begin{proof}[Proof of \Cref{thm:singular-limit}\ref{itm:singular-limit-d}]
By \Cref{thm:well-posed}\ref{itm:singular-limit-a}, there exists $n_{0}\in\mN$ such that, for each $n\ge n_{0}$, the functional $\calE_{\bfV,\varepsilon_{n},\kappa_{n}}$ admits a global minimizer $\bfu^{(n)}$ over $(H_{+}^{1}(\mR^{d}))^{m}$. Consequently, by \Cref{thm:well-posed}\ref{itm:singular-limit-b}, $\bfu^{(n)}$ has a representative in $C_{\rm loc}^{1}(\mR^{d})$ and there exists a constant $C_{R,d,\alpha}>0$, which is independent of $\varepsilon_{n}$ and $\kappa_{n}$, such that 
\begin{equation*}
\max_{i=1,\cdots,m} u_{i}^{(n)}(x) \le 2 \min\left\{1,C_{R,d,\alpha}(1+\abs{x})^{-\frac{d-1}{2}}e^{-\alpha\abs{x}}\right\} \quad \text{for all $x\in\mR^{d}$}
\end{equation*}
and for all $n\ge n_{0}$. On the other hand, applying \eqref{coercivity1} to $\bfu^{(n)}$ gives 
\begin{equation*} 
\sum_{i=1}^{m} \|\nabla u_{i}^{(n)}\|_{L^2(\mR^{d})}^2 \le \overbrace{\calE_{\bfV,\varepsilon_{n},\kappa_{n}}(\bfu^{(n)})}^{\le\calE_{\bfV,\varepsilon_{n},\kappa_{n}}(\mathbf{0})=0} + 2m\norm{\bfV_{-}}_{L^{\infty}(\mR^{d})}\abs{B_R}. 
\end{equation*} 
By the above two inequalities, using the compactness argument that has been used in the proof of \Cref{thm:existence}, there exists a subsequence, still denoted by $\left\{\bfu^{(n)}\right\}_{n\in\mN}$, such that $\bfu^{(n)}\rightarrow\bfu_{*}$ weakly in $(H^{1}(\mR^{d}))^{m}$, strongly in $(L^{2}(\mR^{d}))^{m}$, and a.e. in $\mR^{d}$. The uniform boundedness of 
\begin{equation*}
\kappa_{n}\sum_{1\le i<j\le m}\norm{u_{i}^{(n)}u_{j}^{(n)}}_{L^{2}(\mR^{d})}^{2}, 
\end{equation*}
which is a consequence of \eqref{coercivity1}, implies that $\bfu_{*}\in\mS_{m}$. 

Now, by \Cref{thm:singular-limit}\ref{itm:singular-limit-c}, we have 
\begin{equation*}
\mF_{\bfV}(\bfu_{*}) \le \liminf_{n\rightarrow\infty}\calE_{\bfV,\varepsilon_{n},\kappa_{n}}(\bfu^{(n)}) \le \liminf_{n\rightarrow\infty} \calE_{\bfV,\varepsilon_{n},\kappa_{n}}(\bfv) = \mF_{\bfV}(\bfv) \quad \text{for all $\bfv\in\mS_{m}$.} 
\end{equation*}
Taking $\bfv=\bfu_{*}$ in the above inequality, we obtain 
\begin{equation*}
\liminf_{n\rightarrow\infty}\calE_{\bfV,\varepsilon_{n},\kappa_{n}}(\bfu^{(n)}) = \mF_{\bfV}(\bfu_{*}). 
\end{equation*}
Therefore, we can extract a further subsequence, still denoted by $\bfu^{(n)}$, that satisfies \eqref{eq:minimizing-sequence}. 

Using \eqref{eq:minimizing-sequence} and \eqref{eq:hint-done}, as well as the strong convergence in $(L^{2}(\mR^{d}))^{m}$, we see that, for each $i=1,\cdots,m$, 
\begin{equation*}
\lim_{n\rightarrow\infty}\norm{\nabla u_{i}^{(n)}}_{L^{2}(\mR^{d})}=\norm{\nabla u_{*,i}}_{L^{2}(\mR^{d})}. 
\end{equation*}
Finally, using the fact that weak convergence together with convergence of norms in a Hilbert space implies strong convergence (a special case of a general property of uniformly convex spaces, see \cite[Proposition~3.32]{Bre11PDE}), we conclude that $\bfu^{(n)}\to \bfu_{*}$ strongly in $(H^{1}(\mR^d))^{m}$. 
\end{proof}

\end{sloppypar}

\bibliographystyle{custom}
\bibliography{ref}
\end{document}